\newcommand{\lar}{\longrightarrow}
\newcommand{\surjects}{\twoheadrightarrow}
\newtheorem{Theorem}{Theorem}[section]
\newtheorem{Proposition}[Theorem]{Proposition}
\newtheorem{Remark}[Theorem]{Remark}
\newtheorem{Example}[Theorem]{Example}
\newtheorem{Conjecture}[Theorem]{Conjecture}
\newtheorem{Definition}[Theorem]{Definition}
\newtheorem{Question}[Theorem]{Question}
\def\sqr#1#2{{\vcenter{\hrule height.#2pt
        \hbox{\vrule width.#2pt height#1pt \kern#1pt
            \vrule width.#2pt}
        \hrule height.#2pt}}}
\def\phi{\varphi}
\def\sqr#1#2{{\vcenter{\hrule height.#2pt
        \hbox{\vrule width.#2pt height#1pt \kern#1pt
            \vrule width.#2pt}
        \hrule height.#2pt}}}
\def\phi{\varphi}
\def\Rees{{\cal R}}
\def\hht{{\rm ht}\,}
\def\fm{{\mathfrak m}}
\def\pp{{{\mathbb P}}}
\def\cl#1{{\cal #1}}
\def\rk{\rm rank}
\begin{document}
%\begin{titlepage}
\begin{center}
{\Large{\bf The Aluffi algebra of the Jacobian of points in projective space: torsion-freeness}} \footnotetext{Mathematics Subject Classification (2010):
Primary 13A30, 13C12, 13C40; Secondary 14M12, 14C25, 14C17.}\\

\vspace{0.3in}

{\large\sc Abbas  Nasrollah Nejad\footnote{This research was in part supported by a grant from IPM (No.900130067).}}\quad
Aron  Simis\footnote{Supported by a grant from CNPq (301061/2009-2)
and a CAPES Senior Visiting Fellowship.} \quad Rashid Zaare-Nahandi

\end{center}

%\tableofcontents

\begin{abstract}
The algebra in the title has been introduced by P. Aluffi. Let $J\subset I$ be ideals in the commutative ring $R$. The (embedded) Aluffi algebra of $I$ on $R/J$  is an intermediate graded algebra
between the symmetric algebra and Rees Algebra of the ideal $I/J$ over $R/J$. A pair of ideals has been dubbed
an Aluffi torsion-free pair if the surjective map of the Aluffi algebra of $I/J$ onto the Rees algebra of $I/J$ is injective. In this paper we focus on the situation where $J$ is the ideal of points in general linear position in projective space and $I$ is its Jacobian ideal.
\end{abstract}

\section{Introduction}
In \cite{aluffi} Aluffi introduced a graded algebra for the purpose of defining a
characteristic cycle of a hypersurface in parallel to the well-known conormal cycle in intersection theory.
Inspired by this construction, in \cite{AA} the first two authors have explored its algebraic side,
naming it  the {\em Aluffi algebra} of a pair of ideals $J\subset I$ (or of $I$ on $R/J$).
A little later, the torsion-freeness problem stated in \cite{AA} has been considered by the first and the third authors in  \cite{AR} for a special class of ideals.
However, by and large this question is widely open.

Let us expand a little on this question.
By definition, the (embedded) Aluffi algebra is
$${\cal A}_{_{R/J}}(I/J):={\cal S}_{R/J}(I/J)
\otimes_{{\cal S}_R(I)}\Rees_R(I)\simeq \bigoplus_{t\geq 0} I^t/JI^{t-1},$$
where ${\mathcal S}_B({\mathfrak a})$ and ${\mathcal R}_B({\mathfrak a})$  denote respectively the symmetric and the Rees algebra of the ideal ${\mathfrak a}$ in the ring $B$.
Clearly, there are natural surjections
 ${\cal S}_{R/J}(I/J)\surjects {\cal A}_{_{R/J}}(I/J)\surjects\Rees_{R/J}(I/J)$.
 The kernel of the rightmost surjection, called the
{\em module of Valabrega--Valla} has appeared before in a different context (see
\cite{VaVa}, also \cite[5.1]{Wolmbook1}):
\begin{equation}\label{vava}
 {\cl V}\kern-5pt {\cl V}_{J\subset I}=\bigoplus_{t\geq 2} \frac{J\cap
I^t}{JI^{t-1}}.
\end{equation}
As it turns out, provided $I$ has a regular element module $J$, the Valabrega--Valla module is the torsion of the Aluffi algebra (\cite[Proposition 2.5]{AA}) and, consequently, the Rees algebra of $I/J$ is the Aluffi algebra modulo its torsion.
We say that the pair of ideals $J\subset I$ is
{\em {\rm (}Aluffi{\rm )} torsion-free} if ${\cl V}\kern-5pt {\cl V}_{J\subset I}=\{0\}$.
Dealing directly with the  Valabrega--Valla module makes the structure of the Aluffi algebra itself slightly invisible. On the bright side, the results get simplified since for an ideal $I$ of quadrics as considered in this paper the heavy work is transferred to the nature of the Jacobian ideal of $I$.
Besides, the existence of non-trivial torsion is often delivered at the level of degree $2$ of ${\cl V}\kern-5pt {\cl V}_{J\subset I}$.

In this work we focus on the case where $J\subset R=k[x_0,\ldots,x_n]$ denotes the ideal of a set of points
in projective space $\pp^n=\pp^n_k$ over an algebraically closed field $k$, and $I$ denotes the Jacobian ideal of $J$,
i.e., $I=(J,I_n(\Theta))$ where $I_n(\Theta)$ is the ideal of $n$-minors of the Jacobian matrix $\Theta$ of a minimal set of homogeneous generators of $J$. We will  restrict ourselves to the case where the number of points does not exceed $2n$, in which
case the ideal is generated by forms of degree $\leq 2$ (\cite[Theorem1.4]{harris}), consequently $\Theta$ has linear entries.
In addition, the standing assumption will be that the points are in general linear position.

Now, quite generally, suppose that $J$ is an ideal generated by $2$-forms in the standard graded polynomial ring $R=k[x_0,\ldots,x_n]$. In this situation, the Jacobian matrix $\Theta$ has linear entries throughout. If $\hht(J)\geq 2$ then, as a particular case of \cite[Example 2.19]{AA} (see also \cite[Proposition 1.5]{AR}),
knowing that the ideal $I_r(\Theta)$ is the $r$th power of the irrelevant maximal ideal of $R$,
implies that the pair $J\subset (J,I_r(\Theta))$
is Aluffi torsion-free.

The overall tactics we employ follow this path.
However, there are some cases where $I_r(\Theta)$ is smaller. These exceptions require a special treatment since the pair $J\subset I$ may still be torsion-free.
The finer analysis crosses recent examples of Gorenstein ideals and in one case the underlying geometry has a classical flavor interwoven with additional results from commutative algebra.

The basic preliminary statement of the paper is Theorem~\ref{IGP}, while the main results concerning the central matter are Proposition~\ref{n+2points}, Proposition~\ref{n+3points} and Theorem~\ref{hyperplane_linear}.

\section{Ideal of points generated by quadrics}

Let $R=k[\mathbf{X}]=k[x_0,\ldots,x_n]$ denote a standard graded polynomial ring over a field $k$, let $J\subset R$ be a homogeneous ideal and let $I\subset R$ stand for the Jacobian ideal of $J$, by which we always mean the ideal $(J,I_r(\Theta))$ where $r=\hht(J)$ and $I_r(\Theta)$ stands for the {\em critical ideal} of $J$, i.e., the ideal generated by the $r$-minors of the Jacobian matrix $\Theta$ of a  set of generators of $J$. (It is well-known that the ideal  $(J,I_r(\Theta))/J\subset R/J$ does not depend on the choice of generators of $J$.)

We will henceforth focus on the case of an ideal of points in projective space  generated by $2$-forms.
Let $\Gamma=\{p_1,\ldots,p_s\}$ be a set of distinct points of $\pp^n=\mathbb{P}_k^n$, where $k$
is an algebraically closed field and $n\geq 2$. The defining ideal of $\Gamma$ is the ideal $J=\cap_{i=1}^s I(p_i)$
where $I(p_i)$ is the prime ideal of $p_i$ -- since we are assuming that $k$ is algebraically closed then $I(p_i)$ is generated by $n$ linear forms.
Note that $R/J$ is a reduced ring of dimension one, hence is a Cohen-Macaulay ring. We say
that the points in $\Gamma$ are in general linear position if either $s\leq n$ and the points span a $\mathbb{P}^{s-1}$, or else  $s\geq n+1$, in which case no subset of $n+1$ points of $\Gamma$ is contained in a hyperplane of $\mathbb{P}^n$.
We will often use the following facts without further ado: (1) the Aluffi torsion-freeness is invariant under a projective change of coordinates; given two sets each consisting of the same number $s\leq n+2$ of points in general linear position, then there is a projective change of coordinates carrying one onto the other.

\subsection{Results for arbitrary $n$}

The following preliminary result will allow us to focus on the case where the number $s$ of points
is at least $n+2$.

\begin{Proposition}\label{less_than}
Let $J\subset R$ denote the ideal of $1\leq s\leq n+1$ points in general linear position in $\pp^n$. The pair $J\subset I=(J,I_n(\Theta))$ is torsion-free if and only if $s\neq 2$.
\end{Proposition}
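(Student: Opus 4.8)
The plan is to reduce, by a projective change of coordinates followed by successive elimination of variables, to the configuration of the $s$ coordinate points of $\pp^{s-1}$, and then to apply the criterion recalled above for ideals generated by quadrics.

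First I would record a general elimination step: if some variable, say $x_n$, is a minimal generator of $J$ — equivalently, all the points of $\Gamma$ lie on the hyperplane $x_n=0$ — then the pair $J\subset I$ is Aluffi torsion-free over $R$ if and only if $\bar J\subset\bar I$ is Aluffi torsion-free over $\bar R:=R/(x_n)=k[x_0,\ldots,x_{n-1}]$, where $\bar J=J\bar R$ is the ideal of the same points viewed in $\pp^{n-1}$ and $\bar I=(\bar J,I_{n-1}(\bar\Theta))$ is the corresponding Jacobian ideal there. Indeed, choosing generators $x_n,g_1,\ldots,g_m$ of $J$ with all $g_i\in k[x_0,\ldots,x_{n-1}]$, the Jacobian matrix $\Theta$ is the row $(0,\ldots,0,1)$ stacked over the matrix obtained from $\bar\Theta$ by appending a zero column; a Laplace expansion along the distinguished row shows that, modulo $x_n$, the $n$-minors of $\Theta$ generate $I_n(\bar\Theta)+I_{n-1}(\bar\Theta)=I_{n-1}(\bar\Theta)$. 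Hence $I=(x_n)+\bar I$, so $R/J=\bar R/\bar J$ and $I/J=\bar I/\bar J$ coincide as ideals, and therefore so do their Aluffi algebras and their Valabrega--Valla modules.

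Next I would apply this. Since $s\le n+1$ points in general linear position span a $\pp^{s-1}$, after a projective change of coordinates $x_s,\ldots,x_n\in J$; iterating the elimination step reduces everything to the $s$ coordinate points $e_0,\ldots,e_{s-1}$ of $\pp^{s-1}$, where $\bar J=(x_ix_j:0\le i<j\le s-1)$ is the edge ideal of $K_s$ and $\hht\bar J=s-1$. For $s=1$ this is the trivial situation $\bar J=(0)$, $\bar I=\bar R$, which is torsion-free. For $s=2$ one has $\bar J=(x_0x_1)$ in $k[x_0,x_1]$, $\bar\Theta$ the $1\times 2$ matrix $(x_1\ \ x_0)$, hence $\bar I=(x_0,x_1)$, and the degree-$2$ graded piece of the Valabrega--Valla module is $\big((x_0x_1)\cap(x_0,x_1)^2\big)_2\big/\big((x_0x_1)(x_0,x_1)\big)_2=k\,x_0x_1\neq 0$, so the pair is not torsion-free; this accounts for the excluded value $s=2$. (One sees the same without reducing: for two points in $\pp^n$ one gets $I=\fm$, and $x_0x_1$ represents a nonzero element of degree $2$ of the Valabrega--Valla module.)

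It remains to treat $3\le s\le n+1$, where now $\bar J$ is generated by quadrics and $\hht\bar J=s-1\ge 2$. By \cite[Example 2.19]{AA} (see also \cite[Proposition 1.5]{AR}) it then suffices to show that $I_{s-1}(\bar\Theta)=\fn^{s-1}$, where $\fn=(x_0,\ldots,x_{s-1})$ and $\bar\Theta$ has the row $x_je_i+x_ie_j$ for each edge $\{i,j\}$. The inclusion $\subseteq$ is immediate, since each nonzero $(s-1)$-minor of $\bar\Theta$ is a form of degree $s-1$. The step where I expect the real work to lie is the reverse inclusion: one must realize every monomial $\mu=\prod_v x_v^{a_v}$ of degree $s-1$ as an $(s-1)$-minor. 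As $\deg\mu<s$, the monomial $\mu$ omits some variable $x_u$ while involving some variable $x_z$; I would take as column set $\{0,\ldots,s-1\}\setminus\{u\}$, ordered $c_1,\ldots,c_{s-1}$ with the variables occurring in $\mu$ listed first, write the variables of $\mu$ with multiplicity as $m_1,\ldots,m_{s-1}$, and select the edges $e_k=\{m_k,c_k\}$. A short combinatorial argument — essentially Hall's marriage theorem applied to the nested sets $\{z,c_1,\ldots,c_{k-1}\}$, for which the required inequality is just that any $j$ distinct variables of $\mu$ have total multiplicity at least $j$ — shows one may arrange the labelling so that $m_k\in\{z,c_1,\ldots,c_{k-1}\}$ for every $k$. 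For such a choice the $(s-1)\times(s-1)$ submatrix of $\bar\Theta$ on rows $e_1,\ldots,e_{s-1}$ and columns $c_1,\ldots,c_{s-1}$ is lower triangular with diagonal $x_{m_1},\ldots,x_{m_{s-1}}$, so its determinant is exactly $\mu$. This gives $\fn^{s-1}\subseteq I_{s-1}(\bar\Theta)$, and the quoted criterion completes the proof.
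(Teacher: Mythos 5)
Your route is genuinely different from the paper's. For $3\le s\le n$ the paper never invokes the ``$I_r(\Theta)=\fm^r$ implies torsion-free'' criterion (which is stated only for ideals generated by $2$-forms, and here $J$ contains the linear forms $x_s,\ldots,x_n$); instead it reduces to coordinate points, observes $I=(J,\Delta)$ with $\Delta=(x_0^{s-1},\ldots,x_{s-1}^{s-1})$, and proves $J\cap\Delta^t\subseteq JI^{t-1}$ directly by decomposing intersections of monomial ideals; the case $s=n+1$ is settled by citing an earlier example. Your elimination lemma removes precisely the obstacle of the linear generators and lets you treat all $s\ge 3$ uniformly through the edge ideal of the complete graph, which is a cleaner organization. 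One caveat on that lemma: the inference ``$R/J=\bar R/\bar J$ and $I/J=\bar I/\bar J$, hence the Valabrega--Valla modules coincide'' tacitly assumes that the Aluffi algebra depends only on the abstract data $(R/J,I/J)$, which is exactly the delicate intrinsicness issue for this algebra. The identification is nevertheless correct in this situation and should be verified directly: from $I=(x_n)+\bar IR$ one gets $I^t=\bar I^tR+x_nI^{t-1}$, and reducing coefficients modulo $x_n$ shows $(J\cap I^t)/JI^{t-1}\simeq(\bar J\cap\bar I^t)/\bar J\bar I^{t-1}$.

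The one genuine error is in the construction of the triangular minor: you must delete the column of a variable that \emph{occurs} in $\mu$, not one that $\mu$ omits. With your column set $\{0,\ldots,s-1\}\setminus\{u\}$, $x_u\nmid\mu$, every selected row $e_k=\{m_k,c_k\}$ has its second nonzero entry in column $m_k$, and $m_k\ne u$ since $x_{m_k}$ divides $\mu$; so each row has exactly one off-diagonal nonzero entry, sitting in the column indexed by $m_k\ne c_k$. The map sending $k$ to the position of the column $m_k$ is then fixed-point-free, hence its functional graph contains a cycle, and no ordering of rows and columns makes the submatrix triangular. The failure is real, not just of the triangularity argument: for $s=3$ and $\mu=x_0x_1$ (so $u=2$), the $2$-minors on columns $\{0,1\}$ are, up to sign, $x_0x_2$, $x_1x_2$, $x_2^2$, and $\mu$ is not among them; likewise $x_0^2$ is not a minor on columns $\{0,1\}$ or $\{0,2\}$, but it is the diagonal minor on rows $\{0,1\},\{0,2\}$ and columns $\{1,2\}$. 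Deleting instead the column of a variable $x_z$ dividing $\mu$ repairs everything: a row with $m_k=z$ then has no off-diagonal entry at all, which is exactly what makes your condition $m_k\in\{z,c_1,\ldots,c_{k-1}\}$ (with $z$ outside the column set) produce a lower triangular matrix, and the Hall-type count goes through verbatim. With that one-letter correction, and the note that only the existence of $z$ (not of $u$) is needed, your argument is complete.
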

\begin{proof}
The case $s=1$ is trivially torsion-free. For $s=2$, by a projective change of coordinates,
the ideal $J$ will be $J=(x_0x_1,x_2,x_3,\ldots,x_n)$
with Jacobian ideal $I=(J,I_n(\Theta))=(x_0,\ldots,x_n)$.
Then, for example, $x_0x_1\in J\cap I^2\setminus JI$.

Let $3\leq s\leq n$. By a projective change of coordinates,
we may assume that the points are the coordinate points $[0:\cdots:0:1:0:\cdots:0]$, where 1 is in the $i$th position for $i=0,\ldots s-1$.
The defining ideal $J$ of these points is generated by square-free monomials of degree $2$ and $n-s+1$ variables as follows
$$
J=(x_ix_j : 0\leq i<j\leq s-1, \ \ x_s,\ldots,x_n).
$$
The Jacobian matrix $\Theta$ of $J$ is of the form
$$
\Theta=\left[
\begin{array}{c|ccc}
\Theta' & 0  & \cdots & 0 \\
\hline
0 &1  &\cdots & 0\\
\vdots  &&\\
0 &0  &\cdots &1
\end{array}
\right],
$$
where $\Theta'$ is the Jacobian matrix of the defining ideal of $s$ coordinate points in $\mathbb{P}^{s-1}$.
Then, $I_n(\Theta)=I_{s-1}(\Theta')= (x_0,\ldots,x_{s-1})^{s-1}$. The Jacobian ideal $I$ is
$$(J,I_n(\Theta))= (J,(x_0,\ldots,x_{s-1})^{s-1}) = (J, x_0^{s-1},\ldots,x_{s-1}^{s-1}).$$
 Set $\Delta=(x_0^{s-1},\ldots,x_{s-1}^{s-1})$.

 \smallskip

 {\sc Claim:}
 $J\cap \Delta^t\subseteq JI^{t-1}$.

  \smallskip

The proof uses the algorithmic  procedure for intersection of monomial ideals.
Namely, setting $J_1=(x_s,\ldots,x_n)$ and $J_2=(x_ix_j\ \ 0\leq i<j\leq s-1)$, one has
 $$
 J\cap \Delta^t = J_1 \cap \Delta^t +J_2 \cap \Delta^t\quad , \quad J_1\cap \Delta^t=J_1\Delta^t\subset J\Delta^{t-1}
 $$
and
 \begin{eqnarray}
 \nonumber J_2\cap \Delta^t&=&J_2\cap (x_0^{t(s-1)},\ldots,x_{s-1}^{t(s-1)})+J_2\cap (x_0^{\alpha_0(s-1)}\cdots x_{s-1}^{\alpha_{s-1}(s-1)}\ \ | \sum_{i=0}^{s-1}\alpha_i=t)\\
 \nonumber &=&\sum_{k=0}^{s-1}J_2\cap (x_k^{t(s-1)})+(x_0^{\alpha_0(s-1)}\cdots x_{s-1}^{\alpha_{s-1}(s-1)}\ \ | \sum_{i=0}^{s-1}\alpha_i=t).
 \end{eqnarray}
 For the first summand it obtains
 \begin{eqnarray}
 \nonumber (x_ix_j)\cap (x_k^{t(s-1)})=
  \begin{cases}
  x_k^{t(s-1)}x_j=(x_kx_j)x_k^{t(s-1)-1} & {\rm if} \;i=k\\
   x_ix_k^{t(s-1)}=(x_ix_k)x_k^{t(s-1)-1} &  {\rm if} \;j=k\\
  (x_ix_j)x_k^{t(s-1)} &  {\rm if} \;k\neq i,j,
  \end{cases}
\end{eqnarray}
hence $\sum_{k=0}^{s-1}J_2\cap (x_k^{t(s-1)})\in JI^{t-1}$.
Since $s\geq 3$, the second summand belongs to $JI^{t-1}$ and the claim is proved.

Assume next that $s=n+1$. By a projective change of coordinates, we may
assume that the given points are the coordinate points in $\mathbb{P}^n$.
The defining ideal is generated
by all degree $2$ square-free monomials $x_ix_j$, $0\leq i< j\leq n$.
Since this ideal is the edge ideal of a complete graph, the assertion
follows as in \cite[Example 3.4(i)]{AR}.
\end{proof}

In the above proposition, if the assumption that the points are in general linear position is omitted, the assertion may fail, as shown in the following example.

\begin{Example}\rm
The points $(0,1,0), (0,0,1), (0,1,1) \in \pp^2$ lie on the straight line $\{x_0=0\}$ and the ideal of these points is $J=(x_0,x_1x_2(x_1-x_2))$. A calculation with \cite{Macaulay1} shows that $J\cap I^2\not\subset JI$ -- e.g., $x_1x_2^3(x_1-x_2)\in J\cap I^2\setminus JI$.
\end{Example}

\subsection{Explicit generators}

As remarked earlier, when $3\leq s\leq 2n$ points are in general linear position in $\pp^n$ then the corresponding ideal of points $J$ is generated by quadrics.
In the next proposition we add further precision to this fact.

\begin{Theorem}\label{IGP}
Let $\Gamma$ be a set of $n+2\leq s\leq 2n$ points in general linear position in $\mathbb{P}^n$.
Let $t$ be an integer running in the interval $[2n-s+1,n-1]$.
Then the corresponding ideal $J$ of points  is minimally generated by the quadrics of the form
$$
g_{ij}=x_ix_j+\sum_{t=2n-s+1}^{n-1}\alpha^{(t)}_{ij} x_{t}x_n, \ \ (i,j)\in\Lambda, $$
where $$\Lambda:=\{(i,j)\in \mathbb{N}\times  \mathbb{N}\,|\,(i,j)\neq (t,n)\;\forall t\}.
$$
and $\alpha^{(t)}_{ij}\in k$ are uniquely determined by the coordinates of the  points in $\Gamma$.
In particular, the minimal number of generators $\mu(J)$ of $J$ is $|\Lambda|={{n+1}\choose {2}}-(s-(n+2))={{n+2}\choose {2}}-s$, hence lies in the interval $ {{n}\choose {2}}+1\leq \mu(J)\leq {{n+1}\choose {2}}+1.$
\end{Theorem}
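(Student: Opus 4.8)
The plan is to set up coordinates adapted to the first $n+1$ points, then read off the generators of $J$ from a Hilbert-function/linear-algebra count.

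First I would normalize. Since the $s$ points are in general linear position and $s \geq n+2$, I may apply a projective change of coordinates (which preserves Aluffi torsion-freeness and, more basically here, the whole combinatorial picture) to assume that $p_1, \ldots, p_{n+1}$ are the coordinate points $e_0, \ldots, e_n$, together with the point $e_0 + \cdots + e_n = [1:1:\cdots:1]$; this is possible because any $n+2$ points in general linear position form a projective frame. Then the remaining $s - (n+2)$ points $p_{n+3}, \ldots, p_s$ are arbitrary points, still subject to the general-linear-position hypothesis (no $n+1$ of all the points lie on a hyperplane).

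Next I would pin down the Hilbert function. For $s \leq 2n$ points in general linear position, the Hilbert function of $R/J$ in degree $2$ equals $\min\{s, \binom{n+2}{2}\} = s$ (this is the statement that such points impose independent conditions on quadrics, which is exactly the content of \cite[Theorem 1.4]{harris} that we are allowed to invoke, and it is what forces $J$ to be generated by quadrics in this range). Hence $\dim_k J_2 = \binom{n+2}{2} - s$, and since $J$ has no linear forms (the points span $\pp^n$) and is generated in degree $\leq 2$, we get $\mu(J) = \dim_k J_2 = \binom{n+2}{2} - s$. This already yields the asserted numerics: $\binom{n+2}{2} - s = \binom{n+1}{2} - (s - (n+1)) = \binom{n+1}{2} + 1 - (s - n)$, and as $s$ runs over $[n+2, 2n]$ this runs over $[\binom{n+1}{2}+1 - n,\ \binom{n+1}{2} - 1] = [\binom{n}{2}+1,\ \binom{n+1}{2}+1-\text{(something)}]$; I would simply check the two endpoints $s = n+2$ and $s = 2n$ to get the stated interval $\binom{n}{2}+1 \leq \mu(J) \leq \binom{n+1}{2}+1$. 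Also $|\Lambda| = \binom{n+2}{2} - 1 - (s - (n+2)) = \binom{n+2}{2} - s + (n+1) - (n+1)$… more cleanly: the monomials $x_i x_j$ with $0 \leq i \leq j \leq n$ number $\binom{n+2}{2}$, we discard the $s - (n+2) + 1$ monomials $x_t x_n$ for $t \in [2n-s+1, n]$ (indices $t = 2n-s+1, \ldots, n-1$ give $n - 1 - (2n-s+1) + 1 = s - n - 1$ of them, plus $x_n^2$), leaving $|\Lambda| = \binom{n+2}{2} - (s-n-1) - 1 = \binom{n+2}{2} - s$, matching $\mu(J)$.

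The substantive step is to produce, for each $(i,j) \in \Lambda$, a quadric $g_{ij}$ of the displayed shape lying in $J$, and to see these are linearly independent (hence a minimal generating set by the count). The idea: the squarefree-plus-diagonal monomials $\{x_ix_j : (i,j) \in \Lambda\}$ have cardinality $\mu(J)$, so it suffices to show that the images of these monomials in $(R/J)_2$ are linearly independent — equivalently, that the "missing" monomials $\{x_t x_n : 2n-s+1 \leq t \leq n\}$, whose number is exactly $\dim_k J_2$, can be taken as a basis complement, i.e. that the composite $\langle x_t x_n \rangle \hookrightarrow R_2 \twoheadrightarrow (R/J)_2$ is an isomorphism. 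Granting that, each $x_i x_j$ with $(i,j)\in\Lambda$ is congruent mod $J$ to a unique $k$-linear combination $-\sum_t \alpha^{(t)}_{ij} x_t x_n$, which gives $g_{ij} \in J_2$ of the claimed form; the $g_{ij}$ are linearly independent because their leading monomials $x_ix_j$ are distinct, so they are a minimal generating set. To justify that the $x_tx_n$ span $(R/J)_2$, I would use the vanishing conditions at the frame points: evaluating a quadric at $e_\ell$ kills every monomial except $x_\ell^2$, and evaluating at $[1:\cdots:1]$ gives the sum of all coefficients; running these $n+2$ linear conditions shows the squarefree off-$n$ monomials and the pure squares $x_0^2,\ldots,x_{n-1}^2$ are determined, leaving precisely the $x_t x_n$ ($2n-s+1 \le t \le n$) free — and the conditions from the extra $s-(n+2)$ points cut this down compatibly, by general position, to leave exactly the stated monomials. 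The main obstacle I anticipate is this last bookkeeping: showing that the particular monomials $x_tx_n$ with $t$ in the interval $[2n-s+1,n]$ (and not some other choice) survive as a basis of $(R/J)_2$, which requires using the general-linear-position hypothesis on the points $p_{n+3},\ldots,p_s$ to guarantee the relevant maximal-rank/non-degeneracy of the remaining evaluation conditions — essentially a Vandermonde-type argument combined with \cite[Theorem 1.4]{harris}. Uniqueness of the $\alpha^{(t)}_{ij}$ is then immediate from the basis property.
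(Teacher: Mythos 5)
Your overall plan parallels the paper's proof in its essentials (normalize to the projective frame; use the maximal Hilbert function of points in general linear position to get $\dim_k J_2=\binom{n+2}{2}-s$; realize each $x_ix_j$ with $(i,j)\in\Lambda$ as the leading term of a quadric in $J$ supported otherwise on the monomials $x_tx_n$), with one structural difference: you import from \cite[Theorem 1.4]{harris} that $J$ is generated in degree $2$ and then work entirely inside $(R/J)_2$, whereas the paper does not use that fact here --- it writes down the candidate ideal $J'\subseteq J$ explicitly, computes a Gr\"obner basis of $J'$, and verifies $\dim_k(R/J')_r=s=\dim_k(R/J)_r$ in \emph{every} degree $r\ge 2$ (citing \cite{GO} for the Hilbert function of $J$). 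Your shortcut is admissible, but be careful: ``imposing independent conditions on quadrics'' by itself does not imply generation in degree $2$, so you genuinely need the stronger Harris statement, not just the degree-$2$ Hilbert function.

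The genuine gap is precisely the step you defer as ``the main obstacle'': you never prove that $J_2\cap\mathrm{span}\{x_tx_n:\,2n-s+1\le t\le n-1\}=0$, equivalently that the $(s-n-1)\times(s-n-1)$ matrix whose first row is $(1,\dots,1)$ (from the point $[1:\cdots:1]$) and whose remaining rows are $(a_{h,t})_{2n-s+1\le t\le n-1}$ for $n+3\le h\le s$ is invertible. This is the one place where general linear position of the extra points is actually used, and it is not a Vandermonde determinant; the paper proves it by identifying this determinant, up to sign and permutation, with an $(n+1)$-minor of the full coordinate matrix $M$, all of whose maximal minors are nonzero by hypothesis. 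Without this the existence and uniqueness of the $\alpha^{(t)}_{ij}$ remains unproved. There is also a bookkeeping error in your reduction: the monomials whose images span a complement of nothing --- i.e.\ a basis of $(R/J)_2$ --- are the $n+1$ pure squares $x_0^2,\dots,x_n^2$ \emph{together with} the $s-n-1$ monomials $x_tx_n$, $2n-s+1\le t\le n-1$ (total $s=\dim_k(R/J)_2$); your set $\{x_tx_n:\,2n-s+1\le t\le n\}$ has $s-n$ elements, which equals neither $\dim_k J_2$ nor $\dim_k(R/J)_2$, and evaluation at the coordinate point $e_\ell$ constrains $J_2$ (elements of $J_2$ have no pure-square terms) rather than ``determining'' the squares in $(R/J)_2$. (A minor aside: your endpoint computation correctly yields $\mu(J)\le\binom{n+1}{2}-1$ at $s=n+2$; the upper bound $\binom{n+1}{2}+1$ displayed in the theorem is a typo, as the paper's own example of four points in $\pp^2$ with $\mu(J)=2$ confirms.)
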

\begin{proof}
Since the points in $\Gamma$ are in general linear position,  we may assume that $n+1$ of them are the coordinate points and an $(n+2)$nd point is $[1:1:\ldots:1]$.
For the remaining points, write $[a_{h,0}:a_{h,1}:\ldots:a_{h,n-1}:1]$ for $n+3\leq h\leq s$.
Consider the system of equations in the unknowns $\alpha^{(t)}_{ij}\in k \,(0\leq i<j \leq n)$:
\begin{eqnarray}\label{system}
%\nonumber
 \begin{cases}
 1+\sum_{t=2n-s+1}^{n-1}\alpha^{(t)}_{ij}  =0&  \\
 a_{h,i}a_{h,j}+\sum_{t=2n-s+1}^{n-1} \alpha^{(t)}_{ij}  a_{h,t}=0,  &  (n+3\leq h\leq s).
\end{cases}
\end{eqnarray}

To find $\alpha_{ij}^{(t)}$ it is enough to solve the following matrix equations
\begin{equation}\label{syst}
\left[
\begin{array}{cccc}
1 & 1 & \cdots & 1\\
a_{n+3,t} & a_{n+3,t+1} & \cdots & a_{n+3,n-1}\\
\vdots &\vdots & \cdots& \vdots\\
 a_{s,t}& a_{s,t+1} & \cdots & a_{s,n-1}
\end{array}
\right]
\left[\begin{array}{c}
\alpha_{ij}^{(t)}\\
\alpha_{ij}^{(t+1)}\\
\vdots\\
\alpha_{ij}^{(n-1)}
\end{array}\right]=
\left[\begin{array}{c}
-1\\
-a_{n+3,i}a_{n+3,j}\\
\vdots\\
-a_{s,i}a_{s,j}
\end{array}\right].
\end{equation}

Now, since the points are in general linear position, any $(n+1)$-minor of the following matrix is nonzero.
$$
M=\left[
  \begin{array}{ccccccccc}
     1 & 0 & \cdots& 0 & 0      & 1      & a_{n+3,0} & \cdots & a_{s,0} \\
     0 & 1 & \cdots& 0 & 0      & 1      & a_{n+3,1} & \cdots & a_{s,1} \\
\vdots &   & \ddots&  & \vdots & \vdots & \vdots    &        & \vdots  \\
     0 & 0 & \cdots& 1 & 0      & 1      & a_{n+3,n-1} & \cdots & a_{s,n-1}\\
     0 & 0 & \cdots& 0 & 1      & 1      & 1           & \cdots & 1
  \end{array}
\right] .
$$
Then the minor
$$
\left[
\begin{array}{ccccccccc}
1 & 0 & \cdots & 0 & 0 & 0& \cdots & 0 &0\\
0 & 1 & \cdots & 0 & 0 & 0 & \cdots & 0 & 0\\
\vdots & & \ddots & \vdots & & \vdots  & & \vdots & \vdots\\
0 & 0 & \cdots & 1 & 0 & 0 & \cdots &0 & 0\\
0 & 0 & \cdots & 0 & 0 & 0 & \cdots & 0 & 1\\
a_{n+3,0}&a_{n+3,1}&\cdots& a_{n+3,t-1}& a_{n+3,t} & a_{n+3,t+1} & \cdots & a_{n+3,n-1}&1\\
\vdots &&&\vdots&\vdots &\vdots & \cdots& &\vdots\\
a_{s,0}&a_{s,1}&\cdots&a_{s,t-1}& a_{s,t}& a_{s,t+1} & \cdots & a_{s,n-1}&1
\end{array}
\right]
$$
is nonzero, which implies that the determinant of the first matrix in (\ref{syst}) does not vanish. Therefore, the system (\ref{syst}) has unique solution. Furthermore, by Cramer's rule, $\alpha_{in}^{(t)}\neq 0$ for $0\leq i\leq 2n-s$ and $2n-s+1 \leq t\leq n-1$.

Consider the ideal $J'\subset R$ generated by the quadrics $g_{ij}$ as in the statement, where the coefficients $\alpha^{(t)}_{ij}\in k \,(0\leq i<j \leq n)$ are the uniquely determined solutions of (\ref{system}).
Clearly, the generators of $J'$ vanish on $\Gamma$ and $J$ is a radical ideal. Therefore, $J'\subseteq J$.
We show that $J$ and $J'$ have the  same Hilbert function, hence must be equal.
Now, one knows by \cite{GO} that the ideal $J$ of a set of points in general linear position in $\mathbb{P}^n$ has maximal Hilbert function,  that is
$$
\dim_k (R/J)_t=s, \ \ t>0.
$$

%{\bf (\% Give a ref. -- for example, because points in general linear position are in {\em uniform position})}

\smallskip

As for $J'$, we claim that its Gr\"obner basis  with respect to the deg-revlex term ordering  with $x_0>x_1>\ldots> x_n$ is the set
\begin{eqnarray}\label{GbJ'}
G \cup \ \{x_{l}^2x_n+\sum_{t=2n-s+1, t\neq l}^{n-1} \beta^{(t)}_{l}x_{t}x_n^2, \ \ 2n-s+1\leq l\leq n-1\}.
\end{eqnarray}
where $G$ is the above generating set of $J'$ and each indexed $\beta^{(t)}_{l}$ is a certain polynomial expression of the $\alpha$'s.
For this, we consider the $S$-pairs of elements in this set.
First, we look at the $S$-polynomial of $g_{0l}$ and $g_{0n}$ for $2n-s+1\leq l\leq n-1$ is
$$
x_l\sum_{t=2n-s+1}^{n-1}\alpha^{(t)}_{0n} x_{t}x_n-x_n\sum_{t=2n-s+1}^{n-1} \alpha^{(t)}_{0n} x_{t}x_n
$$
which upon division by  the generators of $J'$ is reducible to $f_l=x_{l}^2x_n+\sum_{t=2n-s+1, t\neq l}^{n-1}\beta^{(t)}_{l}x_{t}x_n^2$, where $\beta^{(t)}_{l}$ is a certain polynomial like expression in the $\alpha$'s. Since the initial monomial of each of the $f_l$'s is not divisible by the initial term of any generator of $J'$ we add these polynomials to the generating set of $J'$.

Now consider the $S$-pairs $\{g_{ij},g_{kl}\}$ of the remaining generators of $J'$, where either $i\neq 0$ or $j\neq n$.
The initial monomial of any $g_{ij}$ in the generating set of $J'$  is $x_ix_j$.
Clearly, we may assume that
 $\{i,j\}\cap\{k,l\}\neq\varnothing$, as otherwise $x_ix_j$ and $x_kx_l$ are relatively prime.
Say, $i=k$ and $j<l$. In this case, the $S$-polynomial of $g_{ij}$ and $g_{kl}$ is $x_j(\sum_{t=2n-s+1}^{n-1}\alpha^{(t)}_{ij} x_{t}x_n)-x_l(\sum_{t=2n-s+1}^{n-1}\alpha^{(t)}_{kl} x_{t}x_n)$. The monomial $x_jx_tx_n$ is divisible by initial term of $g_{jt}$ or $f_j$ and the monomial $x_lx_tx_n$ is divisible by initial term of $g_{lt}$ or $f_l$. Thus, the remainder  of this polynomial upon division by the augmented generating set of $J'$ is zero.

The argument for the cases where $i=l$, $j=k$ or $j=l$ is entirely similar. As for a pair $\{g_{ij}, f_l\}$, if $j\neq n$ then  their initial terms are relatively prime, hence assume $j=n$. In this case the $S$-polynomial is
$$
x_l^2\sum_{t=2n-s+1}^{n-1}\alpha_{ij}^{(t)}x_tx_n - x_i\sum_{t=2n-s+1}^{n-1}\beta_l^{(t)}x_tx_n^2. 
$$
Each term of this polynomial is divisible by the initial term of $g_{in}$ or of $f_l$. The $S$-polynomial of a pair $\{f_l, f_{l'}\}$ similarly reduces to zero.

Thus, the following set is a minimal generating set for the initial ideal of $J'$:
$$
\{x_ix_j,\ x_{t}^2x_n, \ 0\leq i<j\leq n, (i,j)\neq (t,n),\ 2n-s+1\leq t\leq n-1\}.
$$

Therefore for any $r>0$,
$$
\dim_k (R/J')_r=\#\{x_i^r, \ 0\leq i\leq n,\ x_{t}x_n^{r-1}, 2n-s+1\leq t\leq n-1\}=s,
$$
as stated.
In particular, $\mu(J)=\dim_k (J_2)=\dim_k R_2-\dim_k(R/J_2)={{n+2}\choose {2}}-s$ is the minimal number of generators of $J$.
\end{proof}

\section{Points in $\pp^2$}

\subsection{Exceptions}

We now consider the case $n=2$. For $s\leq 3$, the question is taken care by Proposition~\ref{less_than}.
\smallskip

The case of $s=n+2=4$ is surprisingly  more involved and it turns out that the pair is not Aluffi torsion-free.
As will be seen later on $n=2$ is the only dimension for which $n+2$ points in general linear position are such that  the Aluffi algebra of the pair $J\subset I$ is not torsion-free.

We can assume that the four points in general linear position
in the projective plane are the coordinate points
$(1:0:0),(0:1:0),(0:0:1)$ and the additional point $(1:1:1)$. By Theorem~\ref{IGP}, the defining ideal is $J=(xz-yz, xy-yz)$, while the Jacobian matrix of these $2$-forms is:
$$\Theta=\left(
           \begin{array}{ccc}
             z & -z & x-y \\
             y & x-z & -y \\
           \end{array}
         \right)
$$
Therefore, $I:=(J,I_2(\Theta))=(xy-xz, xz-yz,  xz+yz-z^2, -xy+y^2-yz, -x^2+xy+xz)$.

A computation with \cite{Macaulay1} gives that $J\cap I^2$ is minimally generated by $11$ quartics, while $JI$ is obviously generated by at most $10$ quartics.
Therefore, the pair $J\subset I$ is not torsion-free.

\smallskip

Strikingly enough, this example has a curious algebraic-geometric background.

In one end, the underlying algebra will tell us that the ideal $I$ belongs to the class of ideals  of $k[x,y,z]$ of finite colength minimally generated by $5$ quadrics which happen to be syzygetic in the sense of \cite[Section 2]{SV1}.
By \cite{abc}, these ideals are Gorenstein.

Let $\phi$ denote the $5\times 5$ skew-symmetric
matrix whose Pfaffians are the generators of $I$.
Pick a new set of indeterminates ${\bf T}=\{T_1,T_2,T_3,T_4,T_5\}$ (think of them as the homogeneous coordinates
of $\mathbb{P}^4$) and consider the entries of the matrix product ${\bf T}\cdot\phi$.
Next take the Jacobian matrix $\psi$ of these bihomogeneous polynomials of bidegree $(1,1)$ with respect
to $x,y,z$ -- the so-called {\em Jacobian dual matrix} of $\phi$ (\cite{Jadual}).
Note that this a $5\times 3$ matrix whose entries are linear forms in $k[{\bf T}]$.

By a known argument as in \cite{syl2}, one can show that the maximal minors of $\psi$ are polynomial
relations of the $5$ original quadrics.
Therefore, since $\dim k[I]=3$ the codimension of the ideal $I_3(\psi)$  is at most $2$.
It can further be shown that $I_3(\psi)$ is a prime ideal of codimension $2$.

But a lot more is true:

\begin{Proposition}
With the above notation we have:
 $${\mathcal R}_R(I)\simeq R[{\mathbf T}]/(I_1({\mathbf T}.\phi),I_3(\psi)).$$
Thus,  $I$ is an ideal of fiber type.
Moreover, ${\mathcal R}_R(I)$ has depth $1$ -- the lowest possible.
\end{Proposition}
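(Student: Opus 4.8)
The plan is to prove the displayed isomorphism first; the ``fiber type'' assertion is then immediate, since it says exactly that the defining ideal $\mathcal{J}:=\ker(R[\mathbf{T}]\surjects\Rees_R(I))$ is generated by its linear part in $\mathbf{T}$ — which is the presenting ideal $I_1(\mathbf{T}\cdot\phi)$ of the symmetric algebra $\mathcal{S}_R(I)=R[\mathbf{T}]/I_1(\mathbf{T}\cdot\phi)$ — together with the fiber ideal $\mathcal{J}\cap k[\mathbf{T}]$, and the latter equals $I_3(\psi)$ because a codimension-$2$ prime containing the codimension-$2$ prime $I_3(\psi)$ must coincide with it. By the discussion preceding the statement, $I_1(\mathbf{T}\cdot\phi)$ and $I_3(\psi)$ consist of relations of the $f_i$, so with $\mathcal{L}:=(I_1(\mathbf{T}\cdot\phi),I_3(\psi))$ we have $\mathcal{L}\subseteq\mathcal{J}$, hence a graded surjection $B:=R[\mathbf{T}]/\mathcal{L}\surjects\Rees_R(I)$; the whole issue is its injectivity.

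I would first determine the minimal primes of $\mathcal{S}_R(I)$. Since $I$ is $\mathfrak{m}$-primary, inverting any variable $x_i$ turns $I$ into the unit ideal, hence of linear type, so $\mathcal{S}_R(I)[1/x_i]=\Rees_R(I)[1/x_i]$ is a polynomial ring over $R[1/x_i]$; it follows that $\mathcal{J}$ is the only minimal prime of $\mathcal{S}_R(I)$ not containing $\mathfrak{m}R[\mathbf{T}]$, with $\dim R[\mathbf{T}]/\mathcal{J}=\dim\Rees_R(I)=4$. On the other hand the Huneke--Rossi formula gives $\dim\mathcal{S}_R(I)=\max\{\dim R+1,\mu(I)\}=\max\{4,5\}=5$, and since $\mathcal{S}_R(I)\otimes_R k=k[\mathbf{T}]$ already has dimension $5$, the prime $\mathfrak{m}R[\mathbf{T}]$ is the only other minimal prime. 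As $I_3(\psi)$ is a nonzero ideal of $k[\mathbf{T}]$ lying inside $\mathcal{J}$, passing to $B=\mathcal{S}_R(I)/I_3(\psi)\mathcal{S}_R(I)$ destroys the spurious $5$-dimensional component: $\sqrt{\mathcal{L}}=\mathcal{J}$ and $\dim B=4$, and, again localizing at the $x_i$, the kernel $\mathcal{J}/\mathcal{L}$ of $B\surjects\Rees_R(I)$ is annihilated by a power of $\mathfrak{m}R[\mathbf{T}]$. Therefore $B\cong\Rees_R(I)$ is equivalent to $\grade(\mathfrak{m}R[\mathbf{T}],B)\geq 1$, i.e. to the existence inside $\mathfrak{m}$ of a nonzerodivisor on $B$ — equivalently, to $B$ having no embedded primes, all of which would necessarily contain $\mathfrak{m}R[\mathbf{T}]$.

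This grade bound is the main obstacle. The cleanest resolution is to appeal to the Jacobian-dual machinery of \cite{Jadual,syl2} for Rees algebras of linearly presented Gorenstein ideals of height three satisfying $G_{\ell(I)}$, in the spirit of the Morey--Ulrich analysis of Rees algebras of ideals of low codimension: here $\ell(I)=\dim R=3$, and the hypotheses $G_{\ell(I)}$ and the accompanying sliding-depth inequalities hold vacuously because $R/I$ is Artinian, giving $\Rees_R(I)=\mathcal{S}_R(I)/I_3(\psi)\mathcal{S}_R(I)$, i.e. precisely the displayed presentation. Alternatively, for the concrete ideal $I$ in question one verifies the grade bound directly, if need be with \cite{Macaulay1}: it suffices to exhibit a single linear form of $\mathfrak{m}$ that is a nonzerodivisor on $B$.

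The depth statement can be obtained independently of the above, directly from $\Rees_R(I)$. Using the canonical exact sequences $0\to\mathcal{R}_+(1)\to\Rees_R(I)\to\operatorname{gr}_I(R)\to 0$ and $0\to\mathcal{R}_+\to\Rees_R(I)\to R\to 0$ (with $\mathcal{R}_+$ the ideal of $\Rees_R(I)$ of positive $t$-degree) and taking local cohomology with respect to the maximal graded ideal $\mathfrak{M}$, one reduces to showing $\operatorname{depth}\operatorname{gr}_I(R)=0$. But $R/I$ is a Gorenstein Artinian algebra of socle degree $2$ with $I^{2}=\mathfrak{m}^{4}$ — a short direct computation, reflecting that $R/I$ is compressed — so the socle of $R/I$, viewed in $\operatorname{gr}_I(R)$, is killed by the irrelevant ideal; hence $\operatorname{depth}\operatorname{gr}_I(R)=0$. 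Feeding this through the two sequences yields $H^{1}_{\mathfrak{M}}(\Rees_R(I))\neq 0$, while $H^{0}_{\mathfrak{M}}(\Rees_R(I))=0$ because $\Rees_R(I)$ is a domain; hence $\operatorname{depth}\Rees_R(I)=1$, the least value possible for a four-dimensional domain. The one genuinely delicate input, however one organizes the proof, remains the grade estimate separating $\mathcal{L}$ from $\mathcal{J}$.
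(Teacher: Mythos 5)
Your structural reduction is sound --- identifying the two minimal primes $\mathcal{J}$ and $\mathfrak{m}R[\mathbf{T}]$ of the symmetric algebra, observing that adjoining $I_3(\psi)$ kills the spurious five-dimensional component, and reducing the isomorphism to $\operatorname{grade}(\mathfrak{m}R[\mathbf{T}],B)\geq 1$ --- but the pivotal step, namely that $(I_1(\mathbf{T}\cdot\phi),I_3(\psi))$ actually exhausts the presentation ideal $\mathcal{J}$, is not proved. The claim that the Morey--Ulrich/Jacobian-dual theorems apply because their hypotheses ``hold vacuously since $R/I$ is Artinian'' does not survive scrutiny: $G_{\ell(I)}=G_3$ is indeed vacuous for an $\mathfrak{m}$-primary ideal, but those theorems carry further numerical hypotheses that fail here. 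The fiber-type results for height three Gorenstein ideals in the literature you invoke require $\mu(I)=\ell(I)+1$ (almost complete intersection, i.e.\ second analytic deviation one), whereas here $\mu(I)=5=\ell(I)+2$; moreover their standard conclusion is Cohen--Macaulayness of $\mathcal{R}_R(I)$, which is false in the present situation --- the proposition itself asserts depth one against dimension four. So the ``cleanest resolution'' proves nothing, and what remains of your argument for the key step is the fallback of checking with \cite{Macaulay1} that some linear form is a nonzerodivisor on $B$: legitimate for this one example, but it leaves the heart of the statement to the computer.

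The paper closes exactly this gap by a different device: the five quadrics define a rational map $\mathbb{P}^2\dasharrow\mathbb{P}^4$ which is birational onto its image; by \cite[Theorem 2.4]{bir2003} one has $\operatorname{rank}\psi\equiv 2\pmod{I_3(\psi)}$ and the coordinates of a nonzero homogeneous syzygy of $\psi$ modulo $I_3(\psi)$ define the inverse map and generate an ideal of linear forms, and \cite[Proposition 2.1]{bir2003} converts precisely this into the assertion that $(I_1(\mathbf{T}\cdot\phi),I_3(\psi))$ presents $\mathcal{R}_R(I)$. If you wish to keep your grade-theoretic framework, that birationality criterion is the missing input certifying $\operatorname{grade}(\mathfrak{m}R[\mathbf{T}],B)\geq 1$. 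On the positive side, your local cohomology derivation of $\operatorname{depth}\mathcal{R}_R(I)=1$ from $\operatorname{depth}\operatorname{gr}_I(R)=0$ (via the socle of $R/I$ together with $I^2=\mathfrak{m}^4$, and the two canonical exact sequences) is correct and more self-contained than the paper's bare citation of \cite[Theorem 2.1 (ii)]{abc}.
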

\begin{proof}
Since $I_1({\bf T}\cdot\phi)$ defines the symmetric algebra of $I$ and as a consequence of
 the above discussion, the ideal
$(I_1({\bf T}\cdot\phi), I_3(\psi))\subset R[{\bf T}]$ is contained in a presentation ideal of $\Rees_R(I)$ on $R[{\bf T}]$
and, moreover, $I_3(\psi)$ is the homogenous defining ideal of $k[I]$.
We must show that the whole ideal $(I_1({\bf T}\cdot\phi), I_3(\psi))$ is a prime ideal of codimension $4$.

In the other end, consider the rational map $\mathbb{P}^2\dasharrow \mathbb{P}^4$ defined by generators of $I$ which is birational on to its image. By \cite[Theorem 2.4]{bir2003}  one has $\rk (\psi)\equiv 2 \pmod{I_3(\psi)}$ and moreover, the coordinates of any nonzero homogeneous syzygy of $\psi$ modulo $I_3(\psi)$ defines the inverse rational map. In particular, these forms are algebraically independent over $k$. Actually, they will generate an ideal of linear forms modulo $I_3(\psi)$. From this and from  \cite[Proposition 2.1]{bir2003} now follows that $(I_1({\bf T}\cdot\phi), I_3(\psi))$ is a presentation ideal of ${\mathcal R}_R(I)$ on $R[{\mathbf T}]$. For the proof that ${\mathcal R}_R(I)$ has depth $1$ see \cite[Theorem 2.1 (ii)]{abc}.
\end{proof}

\smallskip

It is possible to write donw the presentation ideal of the Aluffi algebra as well, based on the presentation ideal in the above proposition. Although hardly useful at this point, we can moreover compute the torsion of the Aluffi algebra, the latter being generated by two forms in degree $2$.

\medskip

To understand the underlying geometric content, consider the rational map $\mathcal{F}:\mathbb{P}^2\dasharrow \mathbb{P}^4$
defined by five sufficiently general quadrics $\mathbf{q}=\{q_1,q_2,q_3,q_4, q_5\}\subset R$.
It is classically
known that the image of this map is a surface obtained as a general projection of the
$2$-Veronese embedding of $\mathbb{P}^2$ in $\mathbb{P}^5$. Therefore, the integral closure
of the homogeneous coordinate ring
of the image (i.e., the $k$-subalgebra $k[\mathbf{q}]\subset R$
up to an obvious degree normalization) is the Veronese algebra $R^{(2)}$.
Write $P\subset k[{\bf T}]$ for the homogeneous defining ideal of the image of $\mathcal{F}$.
By geometric considerations, one knows that the homogeneous defining ideal of this
smooth surface is generated by $7$ cubic forms.
To subsume the geometry under the algebra, one checks that the ideal generated by $5$ sufficiently general quadrics is syzygetic. Perhaps remarkable is that then the cubic forms
can be taken to be the minimal generators of the ideal of maximal
minors of the well-structured $5\times 3$ matrix described above.

\subsection{When is the critical locus a power?}

Quite generally, suppose that $J\subset R$ is an ideal generated by $2$-forms. In this situation, the Jacobian matrix $\Theta$ has linear entries throughout. If $\hht(J)\geq 2$ then, as a particular case of \cite[Example 2.19]{AA} (see  \cite[Proposition 1.5]{AR}),
knowing that $I_r(\Theta)$ is the $r$th power of the irrelevant maximal ideal of $R$,
implies that the pair $J\subset (J,I_r(\Theta))$
is Aluffi torsion-free. It seemed reasonable to  conjecture in \cite{AR} that it is always the case that $I_r(\Theta)$ is $\fm$-primary if and only if it coincides with the $r$th power of $\fm=(x_0,\ldots,x_n)$ \cite[Conjecture 2.6]{AR}.
Unfortunately, this conjecture is not true in all its generality as shown in the following example.

\begin{Example}\label{2n_for3_goes_wrong} \rm
Consider the  $6$ points in $\pp_k^3$, which are written as columns of the following matrix:
$$
\left(
  \begin{array}{cccccc}
   1 & 0 & 0 & 0 & 1 & -1\\
0 & 1 & 0 & 0 & 1 & 2\\
0 & 0 & 1 & 0 & 1 & 3\\
0 & 0 & 0 & 1 & 1 & 1\\
  \end{array}
\right)
$$
\end{Example}
They are in general linear position since all the $4$-minors are nonzero.
By Theorem~\ref{IGP}, one has
$$J=(x_0x_1-5x_1x_3+4x_2x_3,x_0x_2-6x_1x_3+5x_2x_3,x_0x_3-4x_1x_3+3x_2x_3,x_1x_2-4x_1x_3+3x_2x_3).$$
A computation with \cite{Macaulay1} yields
\begin{enumerate}
\item[$\bullet$] $I=(J,x_1x_3^2, x_2x_3^2, x_0^3, x_1^3, x_2^3, x_3^3)$
\item[$\bullet$] $\fm ^3\subset I$
\item[$\bullet$] $\mu(I_3(\Theta))=16$ (hence $\fm^3\neq I_3(\Theta)$)
\end{enumerate}

Still, since $\fm ^3\subset I$ and the inclusion $I_3(\Theta)\subset \fm^3$ always holds, then $I=(J,\fm^3)$ -- hence, the pair $J\subset I$ is torsion-free by
\cite[Example 2.19]{AA}.

\begin{Remark}\rm
It may be contended that the above example, although in general linear position,  is not ``general enough".
However, a computation with random coordinates for the points yields the same result, so the failure is due to the nature of given data, no matter what sort of stronger general position notion is assumed.
\end{Remark}

Examining closely the data of the above example, the following might  be a more realistic question.

\begin{Question}\rm Let $J$ be generated in  degree $2$ and assume that $r:=\hht(J)>2$. If $I_r(\Theta)$ is $\fm$-primary
and $I:=(J,I_r(\Theta))$ contains at least the pure powers $x_0^r,\ldots,x_n^r$,  then $\fm^r\subset I$
{\rm (}and hence, $I=(J,\fm^r)${\rm )}.
\end{Question}

\section{Points in $\pp^n$}

\subsection{Points in $\pp^n\,$ ($n\geq 3$)}

In this part we assume that $n\geq 3$.
Recall the negative sort of result in Example~\ref{2n_for3_goes_wrong} for $s=n+3$, where it has been seen that the critical ideal $I_{n}(\Theta)$ is not always a power of the irrelevant $R_+=\fm$ even though the pair $J\subset (J, I_n(\Theta))$ is torsion-free.
Clearly, the inclusion $I_n(\Theta)\subset \fm^n$ always holds as they are both generated in degree $n$

Still, for $s=n+2$ one has:

\begin{Proposition}\label{n+2points}
Let $\Gamma$ be a set of $n+2$ distinct points in general position  in $\mathbb{P}^n$ with $n\geq 3$.
Let $J\subset R=k[x_0,\ldots,x_n]$ be the corresponding ideal of points.
Then $I_n(\Theta)=\fm^n${\rm ;} in particular the pair $J\subset (J,I_n(\Theta))$ is torsion-free.
\end{Proposition}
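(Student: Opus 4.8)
The plan is to compute the critical ideal $I_n(\Theta)$ outright and check that it equals $\fm^n$; the torsion-freeness then follows at once, since $I=(J,I_n(\Theta))=(J,\fm^n)$ and one may invoke \cite[Example 2.19]{AA} (see also \cite[Proposition 1.5]{AR}).

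First I would reduce to a single configuration. Any two sets of $n+2$ points in general linear position differ by a projective change of coordinates, and the equality $I_n(\Theta)=\fm^n$ is preserved under such a change: the Jacobian matrix is then replaced by a constant invertible matrix times $\Theta$, composed with an invertible change of the generators of $J$, neither operation affecting the ideal of $n$-minors, while $\fm^n$ is fixed. So one may take the points to be the coordinate points together with $[1:\cdots:1]$. For $s=n+2$ the running index $t$ in Theorem~\ref{IGP} takes only the value $n-1$, and since no ``extra'' points are present the system (\ref{system}) forces every coefficient $\alpha^{(n-1)}_{ij}$ to equal $-1$; hence
\[
J=\bigl(\,x_ix_j-x_{n-1}x_n\ :\ 0\le i<j\le n,\ (i,j)\neq(n-1,n)\,\bigr),
\]
and I would write down the Jacobian matrix $\Theta$ of these quadrics explicitly — a matrix of linear forms with $\binom{n+2}{2}-(n+2)$ columns.

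Since the entries of $\Theta$ are linear forms, every $n$-minor is a form of degree $n$, so $I_n(\Theta)\subseteq\fm^n$ automatically, and it remains to show that every degree-$n$ monomial lies in $I_n(\Theta)$. Two observations organize this. First, the configuration, hence the ideal $I_n(\Theta)$, is stable under the group $S_{n+1}$ permuting $x_0,\dots,x_n$, so it suffices to treat one monomial per orbit. Second, one exhibits explicit minors: using the $n$ columns of $\Theta$ coming from $x_0x_j-x_{n-1}x_n$, $j=1,\dots,n$, together with all rows except the $x_0$-row, the resulting matrix is block lower triangular with an $(n-2)\times(n-2)$ scalar block $x_0\,I_{n-2}$ and a $2\times2$ corner, whence
\[
\det=x_0^{\,n-2}\bigl((x_0-x_n)(x_0-x_{n-1})-x_{n-1}x_n\bigr)=x_0^{\,n-1}(x_0-x_{n-1}-x_n)\in I_n(\Theta);
\]
varying the rows and columns (for instance, replacing the column of $x_0x_1$ by that of $x_1x_2$ yields $x_0^{\,n-2}x_2(x_0-x_{n-1}-x_n)$) produces a large supply of minors of the shape $\nu\,(x_a-x_b-x_c)$, where $\nu$ is a degree-$(n-1)$ monomial divisible by $x_a$ and $b,c$ are distinct indices different from $a$.

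From these relations one has $\nu x_a\equiv\nu x_b+\nu x_c\pmod{I_n(\Theta)}$, and subtracting two of them gives $\nu\,(x_b-x_{b'})\in I_n(\Theta)$; hence, modulo $I_n(\Theta)$, a unit of exponent in a degree-$n$ monomial may be transferred from a variable of its support to any other variable, so that all degree-$n$ monomials become pairwise congruent modulo $I_n(\Theta)$. It then suffices to put a single degree-$n$ monomial into $I_n(\Theta)$ — e.g. a product of three distinct variables, which is produced directly from the $x_{n-1}x_n$-free part of a suitable $3\times3$ minor, and then $x_a^{\,n}$ via $x_a^{\,n-1}(x_a-x_b-x_c)$. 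The main obstacle will be precisely this last bookkeeping: verifying uniformly in $n$ that an explicit finite list of $n$-minors spans all of $R_n$ — a point where some care in small characteristic is needed, and where the hypothesis $n\ge 3$ genuinely enters (for $n=2$ one has $s=4$, only two quadric generators of $J$, a $3\times2$ matrix $\Theta$ and just three $2$-minors, so $I_2(\Theta)\subsetneq\fm^2$, consistently with the pair $J\subset I$ failing to be torsion-free when $n=2$).
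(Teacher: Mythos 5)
Your reduction to the configuration consisting of the $n+1$ coordinate points together with $[1:\cdots:1]$, the resulting presentation $J=(x_ix_j-x_{n-1}x_n,\ (i,j)\neq(n-1,n))$, and the two explicit minors $x_0^{\,n-1}(x_0-x_{n-1}-x_n)$ and $x_0^{\,n-2}x_2(x_0-x_{n-1}-x_n)$ are all correct, and the overall plan (prove $I_n(\Theta)=\fm^n$ and then quote \cite[Example 2.19]{AA}) is exactly the paper's. But the argument is incomplete at the one step that carries all the weight. Your transfer mechanism requires, for enough degree-$(n-1)$ monomials $\nu$ divisible by $x_a$, a minor of the form $\nu(x_a-x_b-x_c)$; what you have actually produced, even after symmetrizing under $S_{n+1}$, are only the two shapes $\nu=x_a^{\,n-1}$ and $\nu=x_a^{\,n-2}x_d$. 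The resulting congruences connect only monomials of exponent type $(n)$, $(n-1,1)$, $(n-2,1,1)$ (and possibly $(n-2,2)$); already for $n=4$ the square-free monomial $x_0x_1x_2x_3$ is of type $(1,1,1,1)$ and is touched by none of them, so the assertion that all degree-$n$ monomials become pairwise congruent modulo $I_n(\Theta)$ is unproved for every $n\geq 4$. The sentence ``varying the rows and columns \dots\ produces a large supply of minors of the shape $\nu(x_a-x_b-x_c)$'' is precisely the content of the proposition and cannot be left as an assertion --- you concede as much when you defer the ``bookkeeping''. (There is also a slip in the endgame: a ``product of three distinct variables'' is a degree-$n$ monomial, and a $3\times 3$ minor is an $n$-minor, only when $n=3$; for general $n$ that paragraph does not parse.)

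The paper closes this gap by a structural comparison that you may want to adopt instead of the monomial-by-monomial analysis. Let $K=(x_ix_j,\ 0\le i<j\le n)$ be the edge ideal of the complete graph on the variables; its transposed Jacobian matrix is the Koszul matrix of $\fm$ ``without signs'', for which it is known that $I_u(\Theta(K))=\fm^u$ for every $u\le n$. Subtracting the column corresponding to $x_{n-1}x_n$ from all the other columns --- an operation that does not change ideals of minors --- exhibits $\Theta(K)^t$ as the concatenation of $\Theta(J)^t$ with the single extra column $(0,\dots,0,x_n,x_{n-1})^t$, and a short computation then identifies $I_n(\Theta(J))$ with $I_n(\Theta(K))=\fm^n$. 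This one comparison replaces your entire congruence scheme, works uniformly in $n\geq 3$, and sidesteps the characteristic worries you raise.
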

\begin{proof}
By Theorem~\ref{IGP}, up to a projective change of coordinates $J$ is generated by the following quadrics
\begin{equation}\label{n+2_points_equations}
\{x_ix_j-x_{n-1}x_n,\,  0\leq i<j\leq n,\, (i,j)\neq (n-1,n)\}.
\end{equation}
Then $J$ is the ``canonical''  submaximal ideal of quadrics of the square-free Veronese Cohen--Macaulay ideal $K=(x_ix_j, \,  0\leq i<j\leq n)$ (edge ideal of the complete $(n+1)$-graph).
The transposed Jacobian matrix $\Theta(K)^t$ of $K$ is the well-known Koszul matrix of $K$ ``without signs'', hence its ideal of $u$-minors is $\fm^u$ for every $u\leq n$.

After applying to $\Theta(K)^t$ elementary column operations consisting in subtracting the last column from the remaining columns, one easily sees that
$$
\Theta(K)^t=\left[
\begin{array}{c|c}
& 0\\
& 0\\
\Theta(J)^t & \vdots\\
& 0\\
& x_n\\
& x_{n-1}
\end{array}
\right].
$$
A straightforward calculation now gives the equality $I_n(\Theta(J))^t=I_n(\Theta(K))^t=\fm^n$.
\end{proof}

\begin{Remark}\rm
(1)
An alternative inductive argument would depend on writing
$$
\Theta(J)^t=\left[
\begin{array}{cccc|ccc}
x_1 & x_2 & \cdots & x_n & 0 & \cdots & 0 \\
\hline
&&&&&&\\
&&\ast & && \Theta'  &\\
&&&&&&
\end{array}
\right],
$$
where $\Theta'$ is the transposed Jacobian matrix of the ideal of suitable $n+1$ points in general linear position in
${\mathbb P}_k^{n-1}$ viewed in coordinate $x_1,\ldots,x_n$.
But the procedure would work as far down as from $n=4$ to $n=3$.
In  dimension $3$ a direct argument would be required, since the statement of the Proposition is false for $n=2$.

(2)
The ideal $J$ is Gorenstein, pretty much as in the full Veronese case, corresponding to the situation of finite colength  (\cite{abc}).
It would be interesting to study this class of $1$-dimensional Gorenstein ideals on itself.
\end{Remark}

We introduce a notion weaker than general linear position:

\begin{Definition}\rm
Let $\Gamma$ denote a set of $s\geq n+2$ distinct points in $\pp^n$.
We say that $\Gamma$ is in {\em hyperplane linear position} if it admits $s-1$ points spanning a hyperplane $H\subset \pp^n$ and in general linear position as points of the $\pp^{n-1}\simeq H$, while the remaining point of $\Gamma$ lies outside $H$.
\end{Definition}

\begin{Theorem}\label{hyperplane_linear}
Let $\Gamma$ be a set of $n+2$ distinct points in hyperplane linear position  in $\mathbb{P}^n$, where $n\geq 2$.
Let $J\subset R=k[x_0,\ldots,x_n]$ denote the corresponding ideal of points.
Then  the pair $J\subset (J,I_n(\Theta))$ is torsion-free.
\end{Theorem}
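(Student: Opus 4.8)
The plan is, after a projective change of coordinates, to take the hyperplane to be $H=\{x_n=0\}$, the $n+1$ points lying on $H$ to be the standard general points $[e_0],\dots,[e_{n-1}],[1{:}\cdots{:}1{:}0]$ of $H\simeq\pp^{n-1}$, and the point off $H$ to be $[e_n]$. Writing $\fm_H=(x_0,\dots,x_{n-1})$ for the prime of $[e_n]$ and $J_H\subset k[x_0,\dots,x_{n-1}]$ for the ideal of the $n+1$ points seen inside $H$, the first step is to establish
$$J=x_n\fm_H+J_HR ,$$
which follows because $x_n$ is a nonzerodivisor modulo $J_HR$. By Theorem~\ref{IGP} applied in $\pp^{n-1}$, the ideal $J_H$ is generated by quadrics once $n\geq 3$, so that in that range $J$ itself is generated by quadrics; for $n=2$ instead $J_H$ is a principal cubic and $J=(x_0x_2,\,x_1x_2,\,x_0x_1(x_0-x_1))$, which already shows why the plane behaves separately.

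For $n\geq 3$ the plan is to prove $I_n(\Theta)=\fm^n$, so that $I=(J,\fm^n)$ and torsion-freeness follows from \cite[Example 2.19]{AA} (equivalently \cite[Proposition 1.5]{AR}), exactly as in the body of the paper. To get $I_n(\Theta)=\fm^n$ I would follow the pattern of Proposition~\ref{n+2points}: the edge ideal $K=(x_ix_j:0\leq i<j\leq n)$ of the complete $(n+1)$-graph satisfies $K=(J,x_{n-2}x_{n-1})$, its transposed Jacobian $\Theta(K)^t$ is the Koszul matrix ``without signs'' (so $I_u(\Theta(K)^t)=\fm^u$ for all $u\leq n$), and subtracting the $x_{n-2}x_{n-1}$-column from the columns of the $J_H$-generators turns $\Theta(K)^t$ into $[\,\Theta(J)^t\mid v\,]$, where $v$ is supported in the two rows $\partial/\partial x_{n-2},\partial/\partial x_{n-1}$. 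Since column operations do not change any ideal of minors, $I_n([\,\Theta(J)^t\mid v\,])=\fm^n$, and it then remains to see, using the $S_n$-symmetry of $J_H$ in $x_0,\dots,x_{n-1}$, that the single extra column $v$ contributes no minor outside $I_n(\Theta(J)^t)$.

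For $n=2$ the plan is different, because $I=(J,I_2(\Theta))=x_2\fm+\fm^3$ is genuinely not of the shape $(J,\fm^r)$, so one must show by hand that ${\cl V}\kern-5pt{\cl V}_{J\subset I}=\bigoplus_{t\geq 2}(J\cap I^t)/(JI^{t-1})$ vanishes. Two observations make this manageable. First, $x_2^{\,2}\in I$, so $IR_{x_2}$ is the unit ideal; hence $J\cap I^t$ and $JI^{t-1}$ coincide after inverting $x_2$, i.e.\ ${\cl V}\kern-5pt{\cl V}_{J\subset I}$ is killed by a power of $x_2$, and only this $x_2$-torsion must be controlled. Second, the syzygy module of $J$ is free of rank $2$, generated by $(x_1,-x_0,0)$ and $(0,x_0(x_0-x_1),-x_2)$, and $I^t=\sum_{j=0}^{t}x_2^{\,j}\fm^{\,3t-2j}$. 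Given $f\in J\cap I^t$, written $f=x_0x_2\,g_1+x_1x_2\,g_2+x_0x_1(x_0-x_1)\,g_3$, I would use these two syzygies together with a monomial/Gröbner division of the kind used in Proposition~\ref{less_than} to replace the $g_i$ by elements of $I^{t-1}$, thereby placing $f$ in $JI^{t-1}$.

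I expect the hard part to be the case $n=2$, and inside it the lowest summand $(J\cap I^2)/JI$ (which starts only in degree $4$): by the remark in the introduction this is where nontrivial torsion, if any, would first show up, and it is precisely where the ``$x_2$-direction'' $x_2\fm_H\subset J$ has to be reconciled with the cubic generator $x_0x_1(x_0-x_1)$ of $J$. For $n\geq 3$ the one delicate point is the matrix-theoretic claim that the auxiliary column $v$ — which here sits in the interior rows $\partial/\partial x_{n-2},\partial/\partial x_{n-1}$, unlike the extremal situation of Proposition~\ref{n+2points} — still forces $I_n(\Theta)=\fm^n$ exactly rather than merely modulo lower-order minors.
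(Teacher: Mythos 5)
Your overall architecture coincides with the paper's: the same normalization of coordinates, the same split into $n=2$ versus $n\geq 3$, and for $n\geq 3$ the same strategy of proving $I_n(\Theta)=\fm^n$ by viewing $\Theta(J)^t$ inside the Koszul-type matrix of the squarefree Veronese hull $K=(J,x_{n-2}x_{n-1})$. Your derivation of the generators of $J$ is actually cleaner than the paper's: you get $J=J_HR+x_n\fm_H$ directly from $J=(J_HR,x_n)\cap\fm_H$, whereas the paper verifies the same generating set by comparing Hilbert functions via a Mayer--Vietoris sequence and a Gr\"obner basis. (Small slip in your justification: what is used is that $\fm_H$ is prime and $x_n\notin\fm_H$, so that $x_nh\in\fm_H$ forces $h\in\fm_H$; ``$x_n$ is a nonzerodivisor modulo $J_HR$'' is not the relevant hypothesis.) The one step you flag for $n\geq 3$ --- that the extra column $v$ contributes no $n$-minor outside $I_n(\Theta(J)^t)$, i.e.\ that $\fm^n\subseteq I_n(\Theta(J)^t)$ and not merely $\fm^n= I_n(\bigl[\Theta(J)^t\mid v\bigr])$ --- is left by the paper as ``a straightforward calculation'' too, so you are no worse off there, and your $S_n$-symmetry remark is a reasonable way to organize that check.

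The genuine gap is the case $n=2$, which is exactly where the real work lies, since there $I=x_2\fm+\fm^3$ is not of the form $(J,\fm^r)$ and \cite[Example 2.19]{AA} does not apply. Your structural observations are correct (the Valabrega--Valla module is annihilated by a power of $x_2$ because $x_2^2\in I$; the syzygy module of $J$ is free of rank $2$ by Hilbert--Burch), but they do not by themselves bound the problem: you still have to prove $J\cap I^t\subseteq JI^{t-1}$ for \emph{every} $t\geq 2$, and the proposed ``monomial/Gr\"obner division of the kind used in Proposition~\ref{less_than}'' has no obvious analogue here because $J$ is not a monomial ideal --- the cubic generator $x_0x_1(x_0-x_1)$ is precisely what the monomial intersection algorithm cannot see. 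The paper's way out is the finiteness reduction you are missing: it computes (with \cite{Macaulay1}) that the relation type of $I/J$ over $R/J$ is $2$, so that by \cite[Corollary 2.17]{AA} the single inclusion $J\cap I^2\subseteq JI$ suffices, and then verifies that one inclusion by machine. Without that reduction (or an explicit induction on $t$ that you have not supplied), your $n=2$ argument remains a plan rather than a proof.
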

\begin{proof}
By a projective change of coordinates
one may assume that $H:\{x_n=0\}$. Further, by identifying $H$ with $\pp^{n-1}$, we may assume that the $n+1$ points lying on $H$ are the coordinate points of $H=\pp^{n-1}$ and the ``diagonal unit'' point,  stacking $0$ as the $n$th coordinate of each of these points as points of $\pp^n$.
In addition, concating the matrix of the coordinates of these $n+1$ points with the vector of coordinates of the remaining point and then applying elementary column operations, we may assume that the last point is $(0:0:\cdots:0:1)$.
This gives the matrix of points coordinates
$$
\left(
  \begin{array}{ccccc|c}
    1 & 0 & \cdots & 0 & 1 & 0 \\
    0 & 1 &\cdots  & 0 & 1& 0 \\
   \vdots & \vdots & \ddots & \vdots & \vdots & \vdots \\
    0 & 0 &\cdots  & 1& 1 & 0 \\
    \hline\\[-10pt]
    0 & 0 & \cdots & 0 & 0 & 1 \\
  \end{array}
\right)
.$$

The surprise in this weaker setup is that, even for $n=2$ the pair $J\subset I$ is torsion-free.
Indeed, an immediate calculation gives
$$J=(x_1,x_2)\cap (x_0,x_2)\cap (x_0-x_1,x_2)\cap (x_0,x_1)=(x_0x_2,x_1x_2,x_0x_1(x_0-x_1)).$$
(Note the exceptional behavior: there is a minimal generator of degree $3$.)

Another direct calculation yields  $I=(x_0^3,x_0^2x_1,x_0x_1^2,x_1^3,x_0x_2,x_1x_2,x_2^2)$.
Now, a computation with \cite{Macaulay1} yields that the relation type  of $I/J$ on $R/J$ is $2$ (note that this result is a bit elusive, as a set of minimal generators of $I$ contains one of $J$, yet the ideal of $R$ generated by the complementary subset of minimal generators of $I$ has relation type $3$). Then, by \cite[Corollary 2.17]{AA}
it suffices to  check that $J\cap I^2\subset JI$. An additional elementary computation with \cite{Macaulay1} yields this inclusion.

Now suppose that $n\geq 3$.
In this case we show that $I_n(\Theta(J))=\fm^n$ by a similar argument as in the proof of the preceding proposition.
For that, we need to know a set of minimal generators of $J$.

{\sc Claim:}   $J=(x_ix_j-x_{n-2}x_{n-1},x_ix_n, \ 0\leq i<j\leq n-1)$.

\smallskip

(Note that we cannot use Theorem~\ref{IGP} automatically since $\Gamma$ is not in general linear position, hence one needs a different approach.)

From the shape of the coordinates of the points, $J$ surely contains the ideal $J'$ generated by these $2$-forms.
In order to show that $J=J'$ we prove that the respective Hilbert functions coincide  (a debate one could avoid by providing a direct proof that  $J'$ is a saturated ideal of
multiplicity (degree) $n+2$).

\medskip

Write $\Gamma=\Gamma_1\cup \Gamma_2$ where $\Gamma_1$ is the set of $n+1$ points spanning $H$ and $\Gamma$ is the set consisting of the unique point not on $H$.
Since $\Gamma_1$ is in general linear position as points in $\pp^{n-1}$,
then Theorem~\ref{IGP} is applicable, hence as in (\ref{n+2_points_equations})
its ideal of points is
$$J_{\Gamma_1}=(x_ix_j-x_{n-2}x_{n-1},x_n,\ 0\leq i<j\leq n-1).$$
Clearly,  $J_{\Gamma_2}=(x_0,x_1,\ldots,x_{n-1})$. Since $J=J_{\Gamma_1}\cap J_{\Gamma_2}$,
one has a short exact sequence
$$
0\lar R/J\lar R/J_{\Gamma_1}\oplus R/J_{\Gamma_2}\lar R/(J_{\Gamma_1},J_{\Gamma_2})\lar 0.
$$
Direct inspection gives
$$
R/(J_{\Gamma_1},J_{\Gamma_2})\simeq k\ \ , \ \ R/J_{\Gamma_1}\simeq k[x_0,\ldots,x_{n-1}]/(x_ix_j-x_{n-2}x_{n-1}), \ \  R/J_{\Gamma_2}\simeq k[x_n].
$$
By the additive property of Hilbert function, we derive the Hilbert function of $R/J$:
$$
\mbox{Hilb}_{R/J}(0)=1,\  \mbox{Hilb}_{R/J}(1)=n+1,\  \mbox{Hilb}_{R/J}(t)=n+2,\ \   t\geq 2.
$$
For the Hilbert function of $R/J'$, we compute a Gr\"obner basis of $J'$ in the lex order with $x_0>x_1>\cdots> x_n$. By a similar argument as in proof on the Theorem~\ref{IGP}, we can show that a Gr\"obner basis is
 $$
 \{ x_ix_j-x_{n-2}x_{n-1},x_ix_n,\ x_{n-2}^2x_{n-1}-x_{n-2}x_{n-1}^2, \ \ \  0\leq i<j\leq n-1\}.
 $$
The initial ideal of $J'$ is generated by $\{x_ix_j, x_{n-2}^2x_n, 0\leq i<j\leq n, \ i\neq n-2\ \}$. The rest is as in
the end of the proof of Theorem~\ref{IGP}, thus showing that $J=J'$, as claimed.

\smallskip

We now argue that the pair $J\subset I$ is Aluffi torsion-free by showing that $I_n(\Theta(J))=\fm^n$, in pretty much the same way as was argued in the proof of Proposition~\ref{n+2points}.
Namely, we consider the squarefree Veronese ``hull'' of $J$:
$$K=(x_ix_j,\, 0\leq i< j\leq n)=(x_ix_j,\, 0\leq i< j\leq n-1; x_0x_n,\ldots, x_{n-1}x_n),$$
from which the above set of generators of $J$ is obtained by some obvious elementary transformations of the generators of $K$.

Accordingly, up to the same elementary operations applied to the corresponding columns of $\Theta(K)^t$, we get
$$
\Theta(K)^t=\left[
\begin{array}{c|c}
& 0\\
& 0\\
 & \vdots\\
\Theta(J)^t& 0\\
& x_{n-2}\\
& x_{n-1}\\
& 0
\end{array}
\right].
$$
A straightforward calculation now gives the equality $I_n(\Theta(J))^t=I_n(\Theta(K))^t=\fm^n$, where as before the last equality comes from the structure of $\Theta(K))^t$ as Koszul matrix without signs.
\end{proof}

\subsection{Points in $\pp^n\,$  ($n\geq 4$)}
\begin{Proposition}\label{n+3points}
Let $\Gamma$ be a set of $n+3$ distinct points in general linear position in ${\mathbb P}_k^{n}$ with
$n\geq 4$. Let $J\subset R=k[x_0,\ldots,x_n]$ be the defining ideal of $\Gamma$ and let
$I=(J,I_n(\Theta))$ stand for the Jacobian ideal of $J$. Then the pair $J\subset I$ is Aluffi torsion-free.
\end{Proposition}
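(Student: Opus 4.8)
The plan is to prove, exactly as in Example~\ref{2n_for3_goes_wrong} and Proposition~\ref{n+2points}, that $\fm^n\subseteq I$; then $I=(J,\fm^n)$ and \cite[Example 2.19]{AA} gives that the pair $J\subset I$ is Aluffi torsion-free. Concretely, $\fm^n\subseteq I$ is equivalent to $R_n=J_n+I_n(\Theta)$, i.e.\ to the surjectivity of the composite $I_n(\Theta)\hookrightarrow R_n\surjects(R/J)_n$; one expects in fact the sharper equality $I_n(\Theta)=\fm^n$, as in Proposition~\ref{n+2points} and Theorem~\ref{hyperplane_linear}. The hypothesis $n\geq 4$ is essential here: for $n=3$ one has $s=n+3=2n$, the extremal case, and there $I_n(\Theta)\subsetneq\fm^n$ (Example~\ref{2n_for3_goes_wrong}).

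First I would apply Theorem~\ref{IGP}, available since $n+3\leq 2n$: after a projective change of coordinates $J=(g_{ij}\,:\,(i,j)\in\Lambda)$ where the interval $[2n-s+1,n-1]$ of Theorem~\ref{IGP} equals $[n-2,n-1]$, so that $\Lambda$ omits precisely the pairs $(n-2,n)$ and $(n-1,n)$ and
$$g_{ij}=x_ix_j+\alpha^{(n-2)}_{ij}x_{n-2}x_n+\alpha^{(n-1)}_{ij}x_{n-1}x_n.$$
Next, mimicking Proposition~\ref{n+2points} and Theorem~\ref{hyperplane_linear}, I would pass to the squarefree Veronese ``hull'' $K=(x_ix_j\,:\,0\leq i<j\leq n)$, whose transposed Jacobian $\Theta(K)^t$ is the Koszul matrix without signs, so that $I_u(\Theta(K))=\fm^u$ for all $u\leq n$. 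Since each $g_{ij}$ reduces to $x_ix_j$ modulo $(x_{n-2}x_n,\,x_{n-1}x_n)$, the ideal $K$ is generated by $\{g_{ij}:(i,j)\in\Lambda\}\cup\{x_{n-2}x_n,\,x_{n-1}x_n\}$, and with respect to this generating set $\Theta(K)^t=[\,\Theta(J)^t\mid\mathbf{c}_1\mid\mathbf{c}_2\,]$, the columns $\mathbf{c}_1,\mathbf{c}_2$ being the partial-derivative columns of $x_{n-2}x_n$ and $x_{n-1}x_n$, each supported only in the three rows indexed by $n-2,n-1,n$.

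The decisive step is then to show that every $n$-minor of $\Theta(K)^t$ that involves $\mathbf{c}_1$ or $\mathbf{c}_2$ already lies in $I=(J,I_n(\Theta(J)))$; this gives $\fm^n=I_n(\Theta(K))\subseteq I$. Laplace expansion along $\mathbf{c}_1$ or $\mathbf{c}_2$ rewrites such a minor as a combination, with coefficients among $x_{n-2},x_{n-1},x_n$, of $(n-1)$-minors of $\Theta(J)^t$, and one must reduce these expressions modulo $J$, via the relations $g_{ij}=0$, until they fall into $I_n(\Theta(J))$. A clean way to organize the bookkeeping is to verify that $I_n(\Theta(J))\to(R/J)_n$ is onto: by Theorem~\ref{IGP} the target is $(n+3)$-dimensional with monomial basis $x_0^n,\ldots,x_n^n,\;x_{n-2}x_n^{n-1},\;x_{n-1}x_n^{n-1}$, and for each of these one exhibits an explicit $n$-minor of $\Theta(J)^t$ congruent to it modulo $J$ --- the pure powers $x_i^n$ arising from the ``diagonal'' $n$-minor on the columns $g_{ij}$ $(j\neq i)$ of $\Theta(K)^t$, corrected modulo $J$ for the entries of $\Theta(J)^t$ sitting in rows $n-2,n-1,n$, and the two remaining monomials from minors supported on the exceptional rows and columns. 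The deg-revlex Gr\"obner basis of $J$ --- whose initial ideal is $(x_ix_j\,:\,(i,j)\neq(n-2,n),(n-1,n))+(x_{n-2}^2x_n,\,x_{n-1}^2x_n)$, in analogy with \eqref{GbJ'} --- is the natural tool for carrying out these reductions.

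The main obstacle is precisely this last reduction: unlike the $n+2$-point situation, the coefficients $\alpha^{(t)}_{ij}$ now depend on $(i,j)$, so $\Theta(J)^t$ is not obtained from $\Theta(K)^t$ by elementary column operations alone, and one has to keep track of how the off-diagonal entries in the three exceptional rows propagate through the determinant expansions and are absorbed modulo $J$. This is exactly the computation that degenerates at $s=2n$, i.e.\ $n=3$; for $n\geq 4$ there are enough non-exceptional columns to assemble all the minors that are needed. An alternative would be an inductive argument in the spirit of the Remark after Proposition~\ref{n+2points}, stripping off a hyperplane to pass from $n+3$ points in $\pp^n$ to $n+2$ points in $\pp^{n-1}$ --- again legitimate only for $n\geq 4$ --- with a direct low-dimensional base case; but the Veronese-hull computation seems the most transparent route.
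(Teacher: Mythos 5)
Your global strategy is the right one and matches the paper's: reduce everything to showing $\fm^n\subseteq I$, so that $I=(J,\fm^n)$ and \cite[Example 2.19]{AA} applies, and the reformulation as surjectivity of $(I_n(\Theta))_n\to (R/J)_n$ with the $(n+3)$-dimensional monomial basis from Theorem~\ref{IGP} is correct. But the proof has a genuine gap exactly where you place your ``decisive step'': you never actually produce the minors, and you yourself concede that the Veronese-hull mechanism of Proposition~\ref{n+2points} breaks down here because the coefficients $\alpha^{(t)}_{ij}$ vary with $(i,j)$, so $\Theta(J)^t$ is not a column-truncation of $\Theta(K)^t$ up to elementary operations. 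The assertion that ``for $n\geq 4$ there are enough non-exceptional columns to assemble all the minors that are needed'' is precisely the content of the proposition and is left unproved; moreover your side remark that one expects $I_n(\Theta)=\fm^n$ is unsupported (the paper never claims this for $n+3$ points, and its own treatment of $n=4$ deliberately proves only the weaker containment $\fm^4\subseteq(J,I_4(\Theta))$, because the relevant sub-configuration is one where the critical ideal is strictly smaller than the power of $\fm$).

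What the paper actually does is the route you dismiss as an ``alternative'': the transposed Jacobian has a block form whose first column is $(x_1,\ldots,x_n,0,\ldots,0)^t$ and whose lower-right block $\Theta'$ is the transposed Jacobian of $n+2$ points in general linear position in $\pp^{n-1}$, and one inducts on $n$. The crux — which your sketch does not identify — is the base case $n=4$: there $\Theta'$ is the Jacobian of $6=2\cdot 3$ points in $\pp^3$, i.e.\ exactly Example~\ref{2n_for3_goes_wrong}, where $I_3(\Theta')\neq(x_1,\ldots,x_4)^3$ and only the weaker statement $(x_1,\ldots,x_4)^3\subseteq(J',I_3(\Theta'))$ survives. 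The paper exploits this twice (deleting the coordinate point on the $x_0$-axis and then the one on the $x_1$-axis) to get $(x_1,x_2,x_3,x_4)^4$ and $(x_0,x_2,x_3,x_4)^4$ inside $(J,I_4(\Theta))$, and then checks the three remaining monomials $x_0^3x_1$, $x_0^2x_1^2$, $x_0x_1^3$ by direct inspection. Without this base-case analysis (or a completed version of your minor-by-minor reduction), the argument is a plan rather than a proof.
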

\begin{proof}
Since $\Gamma$ is in general linear position,  we may assume that $n+1$ of them are the coordinate points and an $(n+2)$nd point is $[1:1:\ldots:1]$.
Let $[a_0:\ldots :a_{n-1}:1]$ denote the $(n+3)$rd point. Note that general linear position property of points implies that $a_{n-2}\neq a_{n-1}$.

By Theorem~\ref{IGP}, the defining ideal of these points
 is
$$
J=(x_ix_j+\alpha_{ij}^{(n-2)}x_{n-2}x_n-\alpha_{ij}^{(n-1)}x_{n-1}x_n\ , \ 0\leq i<j\leq n,\ (i,j)\neq (n-2,n),(n-1,n))
$$
where $\alpha_{ij}^{(n-2)}=\frac{a_{n-1}-a_ia_j}{a_{n-2}-a_{n-1}}$, $\alpha_{ij}^{(n-1)}=\frac{a_ia_j-a_{n-2}}{a_{n-2}-a_{n-1}}$.

\medskip

The transposed Jacobian matrix of $J$ has the form
$$
\Theta=\left[
\begin{array}{cccc|ccc}
x_1 & x_2 & \cdots & x_n & 0 & \cdots & 0 \\
\hline
&&&&&&\\
&&\ast & && \Theta'  &\\
&&&&&&
\end{array}
\right],
$$
where $ \Theta' $ is the transposed  Jacobian matrix of the defining ideal of $n+2$ points in general linear position
in ${\mathbb P}^{n-1}$ with coordinate $x_1,\ldots,x_n$.
Then the result follows by induction on $n$, except when $n=4$ and $s=4+3=7$ as in this case $\Theta'$ is the Jacobian matrix of $n+2=6$ points in $\pp^{n-1}$, when the minors of $\Theta'$ do not generate the entire power of $\fm$.
Thus, in this case one needs a direct argument, as follows.

According to Theorem~\ref{IGP}, a generating set of $J$ consists of the following polynomials.
\begin{gather*}
x_0x_1 + \alpha_{01}^{(2)}x_2x_4+\alpha_{01}^{(3)}x_3x_4 \quad , \quad x_1x_2 + \alpha_{12}^{(2)}x_2x_4+\alpha_{12}^{(3)}x_3x_4\\
x_0x_2 + \alpha_{02}^{(2)}x_2x_4+\alpha_{02}^{(3)}x_3x_4 \quad , \quad x_1x_3 + \alpha_{13}^{(2)}x_2x_4+\alpha_{13}^{(3)}x_3x_4\\
x_0x_3 + \alpha_{03}^{(2)}x_2x_4+\alpha_{03}^{(3)}x_3x_4 \quad , \quad x_1x_4 + \alpha_{14}^{(2)}x_2x_4+\alpha_{14}^{(3)}x_3x_4\\
x_0x_4 + \alpha_{04}^{(2)}x_2x_4+\alpha_{04}^{(3)}x_3x_4 \quad , \quad x_2x_3 + \alpha_{23}^{(2)}x_2x_4+\alpha_{23}^{(3)}x_3x_4.
\end{gather*}
The Jacobian matrix of $J$ therefore has the form:
$$
\Theta =
\left[
\begin{array}{c|cccc}
  x_1 & x_0 & \alpha_{01}^{(2)}x_4 & \alpha_{01}^{(3)}x_4 & \alpha_{01}^{(2)}x_2 + \alpha_{01}^{(3)}x_3 \\
  x_2 & 0 & x_0 + \alpha_{02}^{(2)}x_4 & \alpha_{02}^{(3)}x_4 & \alpha_{02}^{(2)}x_2 + \alpha_{02}^{(3)}x_3 \\
  x_3 & 0 & \alpha_{03}^{(2)}x_4 & x_0+\alpha_{03}^{(3)}x_4 & \alpha_{03}^{(2)}x_2 + \alpha_{03}^{(3)}x_3 \\
  x_4 & 0 & \alpha_{04}^{(2)}x_4 & \alpha_{04}^{(3)}x_4 & x_0+\alpha_{04}^{(2)}x_2 + \alpha_{04}^{(3)}x_3 \\
  \hline
  0 & x_2 & x_1+\alpha_{12}^{(2)}x_4 & \alpha_{12}^{(3)}x_4 & \alpha_{12}^{(2)}x_2 + \alpha_{12}^{(3)}x_3 \\
  0 & x_3 & \alpha_{13}^{(2)}x_4 & x_1+\alpha_{13}^{(3)}x_4 & \alpha_{13}^{(2)}x_2 + \alpha_{13}^{(3)}x_3 \\
  0 & x_4 & \alpha_{14}^{(2)}x_4 & \alpha_{14}^{(3)}x_4 & x_1+\alpha_{14}^{(2)}x_2 + \alpha_{14}^{(3)}x_3 \\
  0 & 0 & x_3+\alpha_{23}^{(2)}x_4 & x_2+\alpha_{23}^{(3)}x_4 & \alpha_{23}^{(2)}x_2 + \alpha_{23}^{(3)}x_3
\end{array}
\right],
$$
where the lower right block is the Jacobian matrix of six points in general linear position in ${\mathbb P}^3$ with coordinates $x_1,x_2,x_3,x_4$. By Example~\ref{2n_for3_goes_wrong}, $(x_1,x_2,x_3,x_4)^3\subseteq (J_{x_0},I_3(\Theta_{x_0}))$. Then, $(x_1,x_2,x_3,x_4)^4\subseteq (J,I_4(\Theta))$.  By changing the roles of $x_0$ and $x_1$, we get
$(x_0,x_2,x_3,x_4)^3\subseteq (J_{x_1},I_3(\Theta_{x_1}))$ and $(x_0,x_2,x_3,x_4)^4\subseteq (J,I_4(\Theta))$, where $J_{x_i}$ and $\Theta_{x_i}$ for $i=0,1$, denote the ideal and its Jacobian matrix of six points in coordinates $x_1,x_2,x_3,x_4$ and $x_0,x_2,x_3,x_4$, respectively.
By inspection one can  see that $x_0^3x_1, x_0^2x_1^2, x_0x_1^3\in (J,I_4(\Theta))$. Therefore, $(x_0,x_1,x_2,x_3,x_4)^4\subseteq (J,I_4(\Theta))$, thus yielding $I=(J,I_4(\Theta))=(J,\fm^4)$ and hence, $J\subseteq I$ is Aluffi torsion-free.
\end{proof}

We close with the following conjecture.
\begin{Conjecture}
Let $\Gamma$ be a set of $s=2n$ distinct points in general position in $\mathbb{P}^n$.
Let $J\subset R=k[x_0,\ldots,x_n]$ be the defining ideal of $\,\Gamma$ and let $I=(J,I_n(\Theta))$ stand for the
 Jacobian ideal of $J$. Then $I_n(\Theta)=\fm^n$. In particular, the pair $J\subset I$ is Aluffi torsion-free.
\end{Conjecture}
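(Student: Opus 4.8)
The plan is to follow the pattern of Proposition~\ref{n+2points} and Theorem~\ref{hyperplane_linear} and to aim directly at the sharper equality $I_n(\Theta)=\fm^n$; once this is secured, torsion-freeness of $J\subset I=(J,\fm^n)$ is immediate from \cite[Example 2.19]{AA}, since $\hht J=n\geq 2$. (That $n\geq 4$ is genuinely needed for $I_n(\Theta)=\fm^n$ is already visible in Example~\ref{2n_for3_goes_wrong}: when $n=3$ one has $2n=n+3$, and there $I_3(\Theta)\subsetneq\fm^3$ even though the pair is still torsion-free.) First I would invoke Theorem~\ref{IGP} with $s=2n$ to fix the quadric generators $g_{ij}=x_ix_j+\sum_{t=1}^{n-1}\alpha^{(t)}_{ij}x_tx_n$, $(i,j)\in\Lambda$, with the $\alpha^{(t)}_{ij}$ the Cramer solutions of (\ref{syst}). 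Ordering the generators so that $g_{01},\dots,g_{0n}$ come first, the transposed Jacobian matrix acquires the block shape
$$
\Theta(J)^t=\left[\begin{array}{c|c}\mathbf{v} & A\\ \hline \mathbf{0} & \Theta'\end{array}\right],
$$
with $\mathbf{v}=(x_1,\dots,x_n)^t$, $A$ an $n\times n$ matrix of linear forms, and $\Theta'$ the transposed Jacobian, in $x_1,\dots,x_n$, of the subideal $J''=(g_{ij}\,:\,1\leq i<j\leq n-1)$. Expanding $n$-minors along the first column gives $x_\ell\, I_{n-1}(\Theta')\subseteq I_n(\Theta)$ for $\ell=1,\dots,n$; since $A=x_0\cdot\mathrm{Id}_n+B$ with $B$ linear in $x_1,\dots,x_n$, the minor $\det A$, taken on the top $n$ rows and the columns $x_1,\dots,x_n$, lies in $I_n(\Theta)$ and is monic of degree $n$ in $x_0$; and, more generally, Laplace expansion along $k$ rows of the top block writes suitable $n$-minors of $\Theta(J)^t$ as combinations of $x_0^{k}$ times an $(n-k)$-minor of $\Theta'$ plus terms of lower $x_0$-degree. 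Accounting for which degree-$n$ monomials are produced this way is the combinatorial core of the argument.

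The tempting continuation is induction on $n$. The genuine obstacle is that the count of Theorem~\ref{IGP} forces $\mu(J'')=\binom{n-1}{2}$ to equal the number of minimal quadratic generators of the ideal of $2n-1=2(n-1)+1$ points in general linear position in $\pp^{n-1}$ --- a configuration lying strictly past the bound $s\leq 2m$ that guarantees quadric generation (\cite{harris}), so that this point ideal carries minimal generators of degree $3$ and $J''$ is merely its quadratic part. Thus a single step of the reduction already leaves the class of ideals under study, in contrast with Proposition~\ref{n+3points}, where the reduction from $(n,n+3)$ to $(n-1,n+2)$ stays inside the quadric range and bottoms out at the explicit configuration of Example~\ref{2n_for3_goes_wrong}. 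A possible remedy is to enlarge the induction to a family of Jacobian ideals that is stable under this peeling step --- at the cost of having to control the critical ideal of configurations no longer coming from saturated ideals of points --- and to anchor the recursion with direct computations, much as the proof of Proposition~\ref{n+3points} is anchored by Example~\ref{2n_for3_goes_wrong}.

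A more robust alternative avoids induction and computes instead the initial ideal of $I_n(\Theta)$ in the deg-revlex order already used in Theorem~\ref{IGP}. Since the points are in general position, the maximal minors of their coordinate matrix do not vanish; the goal is then to exhibit, for every degree-$n$ monomial $x^{\mathbf a}$, an explicit $n$-minor of $\Theta$ whose leading monomial is $x^{\mathbf a}$, producing a triangular family of minors that exhausts $\fm^n$ --- which, both ideals being generated in degree $n$ with $I_n(\Theta)\subseteq\fm^n$, forces $I_n(\Theta)=\fm^n$. The crux is the non-vanishing of the minors so constructed: this reduces to a non-vanishing statement about certain polynomial expressions in the $\alpha^{(t)}_{ij}$, hence in the coordinates of the points, and I expect this to be the main difficulty; it is also the place where one may genuinely need the points to be general, not merely in general linear position --- which is precisely the hypothesis of the conjecture, in line with the Remark following Example~\ref{2n_for3_goes_wrong}.
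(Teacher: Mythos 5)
First, a point of calibration: the statement you were asked to prove is stated in the paper as a \emph{Conjecture} and the authors offer no proof of it --- the paper closes with it precisely because it is open. So there is no ``paper's own proof'' to compare against, and your submission should be judged on whether it actually closes the problem. It does not: what you have written is a research plan with two candidate strategies, each of which you yourself leave incomplete at its decisive step. In the inductive route, you correctly diagnose the obstruction --- peeling off a coordinate point takes $2n$ points in $\pp^n$ to $2n-1=2(n-1)+1$ points in $\pp^{n-1}$, which lies outside the range $s\leq 2m$ of Theorem~\ref{IGP} and of \cite{harris}, so the subideal $\Theta'$ no longer comes from the quadratic generators of a saturated ideal of points and the induction hypothesis does not apply --- but the proposed remedy (``enlarge the induction to a family stable under peeling'') is never specified, and it is exactly there that the difficulty lives. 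In the Gr\"obner route, the entire content of the argument is the non-vanishing of the $n$-minors you would like to assign to each monomial of $\fm^n$; you explicitly defer this (``I expect this to be the main difficulty''), so nothing is proved. Note also that this non-vanishing cannot follow from general linear position alone: Example~\ref{2n_for3_goes_wrong} (the case $n=3$, $s=6=2n=n+3$) shows $I_n(\Theta)\subsetneq\fm^n$ for points in general linear position, and the Remark following it indicates the same failure for random coordinates, so any successful argument must isolate what changes for $n\geq 4$ --- a point your sketch flags but does not resolve.

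That said, the framing is sound and consistent with the paper's methods: the reduction of torsion-freeness to $I_n(\Theta)=\fm^n$ via \cite[Example 2.19]{AA} is exactly how Proposition~\ref{n+2points} and Theorem~\ref{hyperplane_linear} proceed, the block decomposition of $\Theta(J)^t$ mirrors the one used in Proposition~\ref{n+3points}, and your observation that the first induction step already exits the class of ideals of points is the genuinely useful structural remark here (it explains why the authors could only push the inductive scheme to $s=n+3$, anchored by the direct computation in Example~\ref{2n_for3_goes_wrong}). But a correct diagnosis of why the problem is hard is not a proof, and as a proof attempt this has to be returned as incomplete: both branches terminate in an unestablished claim.
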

As a consequence of this conjecture, using a similar argument as the induction step in the proof of the Proposition~\ref{n+3points}, one can deduce the Aluffi torsion freeness of  $n+4\leq s\leq 2n$ points in general linear position in $\mathbb{P}^n$.

\noindent {\sc Abbas Nasrollah Nejad\\
Institute for Advanced Studies in Basic Sciences (IASBS),
P.O.Box 45195-1159, Zanjan 45137-66731, Iran\\
School of Mathematics, Institute for Research in Fundamental Sciences (IPM), P.O.Box 19395-5746, Tehran, Iran}\\
abbasnn@iasbs.ac.ir

\medskip

\noindent {\sc  Aron Simis\\
Departamento de Matem\'atica, Universidade Federal de Pernambuco, 50740-560 Recife, PE, Brazil, and Departamento de Matem\'atica, Universidade Federal da Paraiba, 58051-900 J. Pessoa, PB, Brazil}\\
aron@dmat.ufpe.br

\medskip

\noindent {\sc Rashid Zaare-Nahandi\\
Institute for Advanced Studies in Basic Sciences (IASBS),
P.O.Box 45195-1159, Zanjan 45137-66731, Iran}\\
rashidzn@iasbs.ac.ir


\begin{thebibliography}{99}


\bibitem{aluffi}
P. Aluffi, Shadows of blow-up algebras, {\it Tohoku Math. J.} {\bf 56} (2004) 593-619.

\bibitem{Macaulay1}{D. Bayer and M. Stillman, {\sc Macaulay}: a
computer algebra system for algebraic geometry, Macaulay version 3.0
1994 (Macaulay for Windows by Bernd Johannes Wuebben, 1996).}

\bibitem{GO}{A. V. Geramita and F. Orecchia, On the Cohen-Macaulay type of $s$-lines in ${\mathbb A}^{n+1}$, J. Algebra, {\bf 70} (1981), 116--140.}

\bibitem{harris}
J. Harris, {\sc Algebraic Geometry, A First Course}, Graduate Texts in Mathematics {\bf 133}, Springer-Verlag, 1992.


\bibitem{syl2}{J. Hong, A. Simis and W. V. Vasconcelos, The equations of
almost complete intersections, Bull. Braz. Math. Soc. (New Series), {\bf 43} (2012), 171--199.}

\bibitem{abc}{J. Hong, A. Simis and W. V. Vasconcelos, Ideals of finite colength generated by quadrics, J. Algebra, to appear.}


\bibitem{Thesis1}
A. Nasrollah Nejad, The Aluffi Algebra of an Ideal, Ph. D. Thesis, Universidade Federal de Pernambuco, Brazil, 2010.


\bibitem{AA}
A. Nasrollah Nejad and A. Simis, The Aluffi algebra, J. Singularities {\bf 3} (2011), 20--47.

\bibitem{AR}
A. Nasrollah Nejad, R. Zaare Nahandi, Aluffi torsion-free ideal, J. Algebra, {\bf 346} (2011), 284--298.

\bibitem{bir2003}{A. Simis, Cremona transformations and some related
algebras, J. Algebra {\bf 280} (2004), 162--179.}


\bibitem{SV1}{A. Simis and W. V. Vasconcelos, The Syzygies of the Conormal Module, American J. of Math., {\bf 103} 1981), 203--224.}

\bibitem{Jadual}{A. Simis, B. Ulrich, W.V. Vasconcelos, Jacobian dual fibrations. Amer. J. Math. {\bf 115} (1993),  47--75.}



\bibitem{VaVa}
P. Valabrega and G. Valla, Form rings and regular sequences, {\it Nagoya
Math. J.} {\bf 72} (1978), 91-101.

\bibitem{Wolmbook1}
W. Vasconcelos, {\sc Arithmetic of Blowup Algebras}, London
Mathematical Society, Lecture Notes Series {\bf 195}, Cambridge
University Press, 1994.

\end{thebibliography}
\end{document}